\newtheorem{thm}{Theorem}[section]
\newtheorem{cor}[thm]{Corollary}
\newtheorem{lem}[thm]{Lemma}
\newtheorem{prop}[thm]{Proposition}
\newtheorem{exa}[thm]{Example}
\theoremstyle{definition}
\newtheorem{defn}[thm]{Definition}
\numberwithin{equation}{section}
\begin{document}
\vbox{\vskip 1cm}

\title{\small Characterizations of $I$-semiregular and $I$-semiperfect rings }
\maketitle
\begin{center}
Yongduo Wang
\vskip 4mm Department of Applied Mathematics, Lanzhou University of Technology\\
Lanzhou 730050, P. R. China\\
 E-mail: ydwang@lut.cn

\end{center}

\bigskip
\leftskip10truemm \rightskip10truemm \noindent {\bf Abstract}
 Let $M$ be a left module over a ring $R$ and $I$ an ideal of $R$. We call $(P, f)$ a
 (locally)projective $I$-cover of $M$ if $f$ is an epimorphism from $P$ to $M$, $P$ is (locally)projective, $Kerf\subseteq IP$,
 and whenever $P=Kerf+X$, then there is a projective summand $Y$ of
 $P$ in $Kerf$ such that $P=Y\oplus X$. This definition generalizes
 (locally)projective covers. We characterize
 $I$-semiregular and $I$-semiperfect rings which are defined by
 Yousif and Zhou [19] using (locally)projective $I$-covers in section 2 and 3. $I$-semiregular and $I$-semiperfect
 rings are characterized by projectivity classes in section 4. Finally, the notion of $I$-supplemented modules are
 introduced and $I$-semiregular and $I$-semiperfect
 rings are characterized by $I$-supplemented modules. Some well known results are obtained as corollaries.\\

\noindent {\bf Keywords:} Semiregular, Semiperfect, Locally
projective modules,  Projectivity class

\leftskip0truemm \rightskip0truemm
\bigskip

\baselineskip=20pt

\section{\bf Introduction and Preliminaries}

\noindent It is well known that (locally) projective covers,
projectivity classes and supplemented modules play important roles
in characterizing semiperfect and semiregular rings. Recently, some
authors had worked with various extensions of these rings (see for
examples [2, 9, 11, 19, 20]). As generalizations of semiregular
rings and semiperfect rings, the notions of $I$-semiregular rings
and $I$-semiperfect rings were introduced by Yousif and Zhou [19].
Our purposes of this paper as follows: (1) characterize
$I$-semiregular and $I$-semiperfect rings by defining (locally)
projective $I$-covers in section 2 and 3; (2) characterize
$I$-semiregular rings and $I$-semiperfect rings in term of
projectivity classes of modules in section 4. (3) chracterize
$I$-semiregular rings and $I$-semiperfect rings by defining
$I$-supplemented modules in section 5.

Let $R$ be a ring and $I$ an ideal of $R$, $M$ a module and $S\leq
M$. $S$ is called \emph{small} in $M$ (notation $S\ll M$) if $M\neq
S + T$ for any proper submodule $T$ of $M$.  As a proper
generalization of small submodules, the concept of $\delta$-small
submodules was introduced by Zhou in [20]. Let $N\leq M$. $N$ is
said to be \emph{$\delta$-small} in $M$ if, whenever $N+X=M$ with
$M/X$ singular, we have $X=M$. $\delta(M)=Rej_{M}(\wp)=\cap\{N\leq
M\mid M/N\in\wp\}$ , where $\wp$ be the class of all singular simple
modules. We also recall that a pair $(P, f)$ is called \emph{a
(locally) projective $(\delta-)$cover} of $M$ if $P$ is (locally)
projective, $f$ is an epimorphism from $P$ to $M$ such that $Kerf\ll
P$($Kerf\ll_{\delta} P$). An element $m$ of $M$ is called
\emph{$I$-semiregular} [2] if there exists a decomposition
$M=P\oplus Q$ where $P$ is projective, $P\subseteq Rm$ and $Rm\cap
Q\subseteq IM$. $M$ is called \emph{an $I$-semiregular module} if
every element of $M$ is $I$-semiregular. $R$ is called
\emph{$I$-semiregular} if $_{R}R$ is an $I$-semiregular module. Note
that $I$-semiregular rings are left-right symmetric and $R$ is
($\delta-$) semiregular if and only $R$ is ($\delta(_{R}R)-)$
$J(R)$-semiregular. $M$ is called \emph{an $I$-semiperfect module}
[11] if for every submodule $K$ of $M$, there is a decomposition
$M=A\oplus B$ such that $A$ is projective, $A\subseteq K$ and $K\cap
B\subseteq IM$. $R$ is called \emph{$I$-semiperfect} if $_{R}R$ is
an $I$-semiperfect module. Note that $R$ is ($\delta-$) semiperfect
if and only $R$ is ($\delta(_{R}R)-)$ $J(R)$-semiperfect. For other
standard definitions we refer to [3, 10, 13].

In this note all rings are associative with identity and all modules
are unital left modules unless specified otherwise. Let $R$ be a
ring and $M$ a module. We use $Rad(M)$, $Soc(M)$, $Z(M)$ to indicate
the Jacobson radical, the socle, the singular submodule of $M$
respectively. $J(R)$ is the radical of $R$ and $I$ is an ideal of
$R$.

\section{\bf $I$-semiregular($I$-semiperfect) rings and projective $I$-covers}

In this section, we introduce the notion of PSD submodules of
modules and use this to define projective $I$-covers which are a
generalization of some well-known projective covers.
Characterizations of $I$-semiregular and $I$-semiperfect rings are
given by projective $I$-covers.  We begin this section with the
following definitions.

\begin{defn} Let $I$ be an ideal of $R$ and $N\leq M$. $N$ is PSD in
$M$ if there exists a projective summand $S$ of $M$ such that $S\leq
N$ and $M=S\oplus X$ whenever $N+X=M$ for any submodule $X\leq M$.
$M$ is PSD for $I$ if any submodule of $IM$ is PSD in $M$. $R$ is a
left PSD ring for $I$ if any finitely generated free left $R$-module
is PSD for $I$.

\end{defn}

\begin{lem} Let $N$ be a direct summand of $M$ and $A\leq N$. Then $A$
is PSD in $M$ if and only if $A$ is PSD in $N$.
\end{lem}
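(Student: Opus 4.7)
The plan is to fix a direct complement $N'$ of $N$ in $M$, so that $M = N \oplus N'$, and to transfer the PSD condition between $M$ and $N$ by routine applications of the modular law. Because any testing submodule $X \leq N$ with $A + X = N$ can be ``lifted'' to the testing submodule $X \oplus N' \leq M$ with $A + (X \oplus N') = M$, and conversely any testing submodule $X \leq M$ with $A + X = M$ restricts via $N \cap X$ to a testing submodule of $N$, the equivalence should fall out symmetrically.

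For the ``only if'' direction, I would take $X \leq N$ with $A + X = N$, observe that $X \cap N' = 0$ so $X + N' = X \oplus N'$, and note that $A + (X \oplus N') = N + N' = M$. Applying PSD of $A$ in $M$ produces a projective direct summand $S$ of $M$ with $S \leq A \leq N$ and $M = S \oplus (X \oplus N')$. The identity $N \cap (X \oplus N') = X$ (verified by a one-line computation using $X \leq N$ and $N \cap N' = 0$) together with the modular law applied to $S \leq N$ gives $N = S + X$, and $S \cap X \leq S \cap (X \oplus N') = 0$ yields directness. Finally, $S$ is a summand of $M$ contained in $N$, and then $S$ is a summand of $N$ as well.

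For the ``if'' direction, I would start with $X \leq M$ satisfying $A + X = M$. Since $A \leq N$, the modular law gives $N = A + (N \cap X)$. Applying PSD of $A$ in $N$ produces a projective summand $S$ of $N$ with $S \leq A$ and $N = S \oplus (N \cap X)$. Since $N$ is a direct summand of $M$, so is $S$. It remains to check $M = S \oplus X$: the equation $M = N + X$ (forced by $A \leq N$) combined with $N = S + (N \cap X)$ delivers $M = S + X$, and directness comes from $S \cap X \leq S \cap N \cap X = 0$.

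The main technical point, and what I expect to be the only real obstacle, is keeping the two modular-law computations straight and making sure that the projective summand produced on one side really is a projective summand on the other side; once the fixed decomposition $M = N \oplus N'$ is in place, this is bookkeeping rather than genuine difficulty.
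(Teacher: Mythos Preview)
Your argument is correct and is essentially the same as the paper's proof. The paper likewise fixes a complement of $N$ in $M$, lifts a test submodule $X\leq N$ to $X\oplus K$ for the forward direction, and restricts a test submodule $L\leq M$ to $N\cap L$ for the converse; the only cosmetic difference is that the paper checks $M=K+L$ by an element computation rather than your one-line $M=N+X=S+(N\cap X)+X=S+X$.
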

\begin{proof} $``\Rightarrow"$ Since $N$ is a direct summand of $M$, $M=N\oplus K$
for some submodule $K\leq M$. Suppose that $N=A+X, X\leq N$, then
$M=A+(X\oplus K)$. Since $A$ is PSD in $M$, there is a projective
direct summand $Y$ of $M$ such that $Y\leq A$ and $M=Y\oplus X\oplus
K$, and hence $N=N\cap M=X\oplus Y$.

$``\Leftarrow"$ Let $M=A+L, L\leq M$. Then $N=N\cap M=A+N\cap L$.
Since $A$ is PSD in $N$, there is a projective summand $K$ of $N$
with $K\leq A$ such that $N=K\oplus (N\cap L)$. It is easy to see
that $K\cap L=0$. Next we only show that $M=K+L$. Let $m\in M$, then
$m=a+l$, $a\in A, l\in L$. Since $a=k+s, k\in K, s\in N\cap L$,
$m=k+s+l$. Note that $s+l\in L$, so $m\in K+L$, and hence $M=K+L$,
as required.

\end{proof}

\begin{cor} Let $M$ be a $R$-module. If $M$ is PSD for an ideal $I$ of
$R$, then any direct summand of $M$ is PSD for $I$.
\end{cor}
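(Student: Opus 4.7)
The plan is to reduce the corollary directly to the preceding lemma. Let $N$ be a direct summand of $M$, say $M = N \oplus K$, and assume $M$ is PSD for $I$. To prove $N$ is PSD for $I$, I must show that an arbitrary submodule $A \leq IN$ is PSD in $N$.

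The key observation is that $IN \leq IM$, because $N \leq M$ implies every finite sum $\sum r_i n_i$ with $r_i \in I$ and $n_i \in N$ is also a finite sum of elements of the form $r_i m_i$ with $m_i \in M$. Consequently $A \leq IN \leq IM$, and the hypothesis that $M$ is PSD for $I$ gives that $A$ is PSD in $M$.

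Since $A \leq IN \leq N$ and $N$ is a direct summand of $M$, I am now in exactly the setting of \lemref{} (the previous lemma), whose ``$\Rightarrow$'' direction says that a submodule of a direct summand $N$ of $M$ which is PSD in $M$ is also PSD in $N$. Applying it to $A$ yields that $A$ is PSD in $N$, which is precisely what was needed. I expect no real obstacle: the only thing to verify carefully is that $IN \subseteq IM$ (immediate) and that the hypotheses of the previous lemma are met (namely $A \leq N$ and $N$ a direct summand of $M$), both of which are available from the setup.
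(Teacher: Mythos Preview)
Your argument is correct and follows exactly the intended route: the paper states this corollary immediately after Lemma~2.2 without proof, and your reduction---observing $A\leq IN\subseteq IM$ so that $A$ is PSD in $M$, then invoking the ``$\Rightarrow$'' direction of Lemma~2.2---is precisely the implicit reasoning.
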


\begin{cor} A ring $R$ is a left PSD ring for an ideal $I$ if and
only if any finitely generated projective left $R$-module is PSD for
$I$.
\end{cor}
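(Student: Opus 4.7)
The plan is to deduce this corollary directly from Corollary 2.3, treating it as a standard upgrade from the free case to the projective case via direct summand arguments.

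First I would dispatch the easy direction ($\Leftarrow$): since every finitely generated free left $R$-module is in particular a finitely generated projective left $R$-module, if all finitely generated projective modules are PSD for $I$, then in particular all finitely generated free modules are, which is precisely the definition of $R$ being a left PSD ring for $I$.

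For the nontrivial direction ($\Rightarrow$), I would take an arbitrary finitely generated projective left $R$-module $P$. Because $P$ is finitely generated, there exists an epimorphism $\pi\colon R^{n}\to P$ for some positive integer $n$; since $P$ is projective, $\pi$ splits, so $P$ is isomorphic to a direct summand of the finitely generated free module $F=R^{n}$. By the hypothesis that $R$ is a left PSD ring for $I$, the module $F$ is PSD for $I$. Corollary 2.3 then tells us that every direct summand of $F$ is PSD for $I$, so $P$ is PSD for $I$, as required.

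There is no serious obstacle here; the argument is a routine application of Corollary 2.3 combined with the standard fact that finitely generated projective modules are direct summands of finitely generated free modules. The only small point to keep in mind is that one must invoke finite generation of $P$ to guarantee that the ambient free module $F=R^{n}$ is itself \emph{finitely} generated, so that the hypothesis of $R$ being left PSD for $I$ applies to $F$.
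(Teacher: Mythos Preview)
Your proof is correct and is exactly the intended argument: the paper states this result as an immediate corollary of Corollary~2.3 without giving a separate proof, and your write-up supplies precisely the routine deduction the paper has in mind.
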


\begin{prop} Let $M=M_{1}\oplus M_{2}$. If $N_{1}$ is PSD in $M_{1}$
and $N_{2}$ is PSD in $M_{2}$, then $N_{1}\oplus N_{2}$ is PSD in
$M$.
\end{prop}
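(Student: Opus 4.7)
The plan is to reduce the problem to two successive applications of the PSD-splitting property, using Lemma 2.2 as a bridge to move PSD-ness between a direct summand and the ambient module.

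First I would promote the hypotheses up to $M$: since $M_1$ and $M_2$ are direct summands of $M$, Lemma 2.2 applied in the direction ``PSD in $M_i \Rightarrow$ PSD in $M$'' shows that each of $N_1$ and $N_2$ is PSD in $M$. This is the step that lets us forget about the decomposition $M=M_1\oplus M_2$ and work entirely inside $M$.

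Next, suppose $X\leq M$ satisfies $(N_1\oplus N_2)+X=M$. I would rewrite this as $N_1+(N_2+X)=M$ and apply the PSD property of $N_1$ in $M$ to produce a projective direct summand $S_1\leq N_1$ of $M$ with $M=S_1\oplus(N_2+X)$. The useful by-product is that $N_2+X$ is now itself a direct summand of $M$ and it contains $N_2$. Applying Lemma 2.2 in the opposite direction (``PSD in $M\Rightarrow$ PSD in the summand $N_2+X$''), I conclude that $N_2$ is PSD in $N_2+X$. The trivial identity $N_2+X=N_2+X$ then gives a projective summand $S_2\leq N_2$ of $N_2+X$ with $N_2+X=S_2\oplus X$. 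Combining these two splittings yields
\[
M \;=\; S_1\oplus(N_2+X) \;=\; S_1\oplus S_2\oplus X,
\]
so $S_1\oplus S_2$ is a projective direct summand of $M$, is contained in $N_1\oplus N_2$, and complements $X$, which is exactly the PSD condition for $N_1\oplus N_2$ in $M$.

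The main subtlety, rather than a genuine obstacle, is recognizing that Lemma 2.2 must be used in both directions: once to lift PSD-ness from each $M_i$ into $M$, and a second time to push PSD-ness from $M$ down into the intermediate summand $N_2+X$ produced by the first splitting. Once that two-way transfer is in hand, the argument needs only the definition of PSD and the fact that direct sums of projectives are projective.
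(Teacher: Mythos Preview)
Your proof is correct and follows essentially the same two-step strategy as the paper: promote the PSD property of $N_1$ (and $N_2$) to $M$ via Lemma~2.2, then peel off projective summands $S_1$ and $S_2$ in succession. The only cosmetic difference is that for the second application you descend into the summand $N_2+X$ (using the other direction of Lemma~2.2) instead of staying in $M$ as the paper does; this has the mild advantage that the directness of $M=S_1\oplus S_2\oplus X$ is automatic, whereas the paper leaves that verification to ``the rest is obvious.''
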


\begin{proof} Let $M=N_{1}\oplus N_{2} +L, L\leq M$. Since $N_{1}$ is PSD in
$M_{1}$, $N_{1}$ is PSD in $M$ by Lemma 2.2. Thus there is a
projective summand $S_{1}$ of $M$ with $S_{1}\subseteq N_{1}$ such
that $M=S_{1}\oplus (N_{2}+L)$. Similarly, there exists a projective
summand $S_{2}$ of $M$ with $S_{2}\subseteq N_{2}$ such that
$M=S_{1}\oplus S_{2}+L$. The rest is obvious.
\end{proof}

\begin{defn} A pair $(P, f)$ is called a projective $I$-cover of $M$
if $P$ is projective, $f$ is an epimorphism from $P$ to $M$ such
that $Kerf\leq IP$, and $Kerf$ is PSD in $P$.

\end{defn}

It is easy to see that a module $M$ has a projective
$\delta(_{R}R)$-cover (projective $J(R)$-cover, respectively) if and
only if $M$ has a projective $\delta$-cover (projective cover,
respectively) by [1, Proposition 3.6].

\begin{prop} If each $f_{i}: P_{i}\rightarrow M_{i}, (i=1, 2,
\cdot\cdot\cdot, n)$ is a projective $I$-cover, then $\oplus
_{i=1}^{n}f_{i}: \oplus _{i=1}^{n}P_{i}\rightarrow \oplus
_{i=1}^{n}M_{i}$ is a projective $I$-cover.
\end{prop}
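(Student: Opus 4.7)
The plan is to verify the four clauses of Definition 2.6 for the pair $(\oplus_{i=1}^{n}P_i,\ \oplus_{i=1}^{n}f_i)$, reducing each clause to a standard fact about direct sums together with the already-proved Proposition 2.5.

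First I would observe that a finite direct sum of projective modules is projective, so $\oplus_{i=1}^{n}P_i$ is projective, and that a finite direct sum of epimorphisms is an epimorphism, so $\oplus_{i=1}^{n}f_i$ is surjective onto $\oplus_{i=1}^{n}M_i$. These two clauses require no real work.

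Next I would handle the kernel-containment condition. Since $\mathrm{Ker}(\oplus_{i=1}^{n}f_i)=\oplus_{i=1}^{n}\mathrm{Ker}(f_i)$ and each $\mathrm{Ker}(f_i)\leq IP_i$ by hypothesis, one gets
\[
\mathrm{Ker}\bigl(\oplus_{i=1}^{n}f_i\bigr)\ =\ \oplus_{i=1}^{n}\mathrm{Ker}(f_i)\ \leq\ \oplus_{i=1}^{n}IP_i\ =\ I\bigl(\oplus_{i=1}^{n}P_i\bigr),
\]
which is the required inclusion.

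The remaining clause — that $\mathrm{Ker}(\oplus_{i=1}^{n}f_i)$ is PSD in $\oplus_{i=1}^{n}P_i$ — is where Proposition 2.5 does the work. Since each $\mathrm{Ker}(f_i)$ is PSD in $P_i$, Proposition 2.5 applied to $M_1\oplus M_2:=P_1\oplus P_2$ with $N_j:=\mathrm{Ker}(f_j)$ shows that $\mathrm{Ker}(f_1)\oplus\mathrm{Ker}(f_2)$ is PSD in $P_1\oplus P_2$. A straightforward induction on $n$, re-applying Proposition 2.5 at each step to the pair $\bigl(\oplus_{i=1}^{k}P_i\bigr)\oplus P_{k+1}$ with PSD submodules $\oplus_{i=1}^{k}\mathrm{Ker}(f_i)$ and $\mathrm{Ker}(f_{k+1})$, yields the general case. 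I do not anticipate a genuine obstacle here; the only point to be careful about is the routine identification $\mathrm{Ker}(\oplus f_i)=\oplus\mathrm{Ker}(f_i)$ and the compatibility of the scalar action with the direct-sum decomposition, both of which are standard.
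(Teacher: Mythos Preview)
Your proof is correct and follows exactly the approach the paper intends: the paper's own proof is the single line ``By Proposition 2.5 and the definition of projective $I$-covers,'' and your argument simply unpacks that line, verifying projectivity, surjectivity, the kernel inclusion $\oplus\mathrm{Ker}(f_i)\leq I(\oplus P_i)$, and invoking Proposition 2.5 (with induction) for the PSD condition.
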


\begin{proof} By Proposition 2.5 and the definition of projective $I$-covers.
\end{proof}

\begin{lem} Let $I$ be an ideal of $R$ and $f: P\rightarrow M$ a
projective $I$-cover. If $Q$ is projective and $g: Q\rightarrow M$
is epic. Then there are decompositions $P=A\oplus B$ and $Q=X\oplus
Y$ such that
\begin{enumerate}
\item $A\cong X$;
\item $f\mid_{A}: A\rightarrow M$ is a projective $I$-cover;
\item $g\mid_{X}: X\rightarrow M$ is a projective $I$-cover;
\item $B\subseteq Kerf$, $Y\subseteq Kerg$.
\end{enumerate}
\end{lem}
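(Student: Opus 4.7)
The plan is to lift in both directions between $P$ and $Q$, extract $A$ from the self-map $\alpha\beta$ on $P$, and then obtain $X$ by splitting an epimorphism $Q\to A$ that appears for free.

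First, projectivity of $Q$ (resp.\ $P$) gives maps $\alpha:Q\to P$ with $f\alpha=g$ and $\beta:P\to Q$ with $g\beta=f$. Then $f\alpha\beta=g\beta=f$, so $(1_P-\alpha\beta)(P)\subseteq Kerf$, whence $P=\alpha\beta(P)+Kerf$. Because $Kerf$ is PSD in $P$ (part of the hypothesis that $(P,f)$ is a projective $I$-cover), applied with $X=\alpha\beta(P)$ it produces a projective direct summand $B$ of $P$ with $B\subseteq Kerf$ and $P=B\oplus\alpha\beta(P)$. Set $A:=\alpha\beta(P)$; this realises $P=A\oplus B$ with $B\subseteq Kerf$, giving the first half of (4).

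Next I would verify $(A,f|_A)$ is a projective $I$-cover of $M$, proving (2). Surjectivity of $f|_A$ is immediate since $f(B)=0$; the inclusion $A\cap Kerf\subseteq IA$ follows from $Kerf\subseteq IP=IA\oplus IB$ together with $A\cap B=0$; and PSD of $A\cap Kerf$ in $A$ is a short modular-law argument: if $(A\cap Kerf)+W=A$ with $W\le A$, then $Kerf+W=P$, so PSD of $Kerf$ in $P$ gives $P=C\oplus W$ with $C\subseteq Kerf$ a projective summand, and the modular law then forces $A=(A\cap C)\oplus W$, realising $A\cap C$ as the required projective summand of $A$ contained in $A\cap Kerf$.

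The crucial observation on the $Q$-side is that the composite $\pi_A\alpha:Q\to A$ (where $\pi_A:P\to A$ is the projection along $B$) is automatically surjective, because $\pi_A\alpha(Q)\supseteq\pi_A(\alpha\beta(P))=\alpha\beta(P)=A$. Projectivity of $A$ splits this epi: choose $s:A\to Q$ with $\pi_A\alpha\circ s=1_A$, and set $X:=s(A)$ and $Y:=Ker(\pi_A\alpha)$, so $Q=X\oplus Y$ and $A\cong X$ via $s$ (this is (1)). For $y\in Y$ one has $\alpha(y)\in B\subseteq Kerf$, hence $g(y)=f\alpha(y)=0$, so $Y\subseteq Kerg$, completing (4). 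Finally, a direct decomposition of $\alpha s(a)$ along $A\oplus B$ yields $gs=f|_A$, so the iso $s$ intertwines $(A,f|_A)$ and $(X,g|_X)$ and transports the projective $I$-cover structure across, proving (3).

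The step I expect to be the main obstacle is conclusion (3), i.e.\ producing a decomposition $Q=X\oplus Y$ with $Y\subseteq Kerg$ and $g|_X$ a projective $I$-cover, since no PSD hypothesis is available on $Q$. The plan above sidesteps this difficulty by exploiting that the particular choice $A=\alpha\beta(P)$ forces $\pi_A\alpha$ to be surjective, so projectivity of $A$ alone provides the splitting $s$, and the complement $Y$ automatically sits inside $Kerg$ because $\alpha(Y)\subseteq B\subseteq Kerf$.
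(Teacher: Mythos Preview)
Your argument is correct and follows the same overall strategy as the paper: lift through the epimorphism, apply the PSD property of $\operatorname{Ker}f$ to decompose $P=A\oplus B$ with $B\subseteq\operatorname{Ker}f$, verify that $f|_A$ is a projective $I$-cover, and then split an epimorphism $Q\to A$ (using projectivity of $A$) to obtain $Q=X\oplus Y$ with the $I$-cover structure on $X$ transported from $A$ via the isomorphism.

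The paper's version is slightly more economical. It uses only the single lift $h:Q\to P$ (your $\alpha$) and takes $A=h(Q)$ directly; then $h:Q\to A$ is surjective by construction, it splits via some $q:A\to Q$, and one sets $X=q(A)$, $Y=\operatorname{Ker}h$, so that $Y\subseteq\operatorname{Ker}(fh)=\operatorname{Ker}g$ is immediate. Your second lift $\beta$, the smaller choice $A=\alpha\beta(P)$, and the projection $\pi_A$ all work, but they are not needed: with $A=\alpha(Q)$ the map $\alpha$ itself already surjects onto $A$, so the detour through $\pi_A\alpha$ can be dropped. Otherwise the two proofs coincide, including the modular-law check of PSD for $A\cap\operatorname{Ker}f$ in $A$ and the transport of the $I$-cover structure across the isomorphism $A\cong X$.
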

\begin{proof} Since $Q$ is projective, there is a homomorphism $h: Q\rightarrow P$
such that $fh=g$, and so $P=h(Q)+Kerf$. Since $Kerf$ is PSD in $P$,
there is a direct summand $B$ of $P$ such that $P=A\oplus B$ with
$A=h(Q), B\subseteq Kerf$. We shall show that $f\mid_{A}:
A\rightarrow M$ is a projective $I$-cover. It is clear that
$Kerf\mid_{A}=A\cap Kerf\subseteq A\cap IP=IA$. Let
$Kerf\mid_{A}+L=A, L\leq A$. Then $P=Kerf\mid_{A}+L\oplus
B=Kerf+L\oplus B$. Since $Kerf$ is PSD in $P$, there is a direct
summand $K$ of $P$ ($K\subseteq Kerf$) such that $P=L\oplus K\oplus
B$, and hence $A=A\cap P=A\cap (L\oplus K\oplus B)=L\oplus (A\cap
(K\oplus B))$. It is easy to see $A\cap (K\oplus B)\subseteq
Kerf\mid_{A}$, so $Kerf\mid_{A}$ is PSD in $A$. Thus $f\mid_{A}:
A\rightarrow M$ is a projective $I$-cover. Since $A$ is projective,
$h: Q\rightarrow A$ splits, and hence there is a homomorphism $q:
A\rightarrow Q$ such that $hq=1_{A}$. So $Q=X\oplus Y, X=q(A),
Y=Kerh$. Since $X=q(A)$, $A\cong X$. Next we show that $g\mid_{X}:
X\rightarrow M$ is a projective $I$-cover. Since
$g(X)=fh(X)=fh(X+Y)=fh(Q)=M$, $g\mid_{X}: X\rightarrow M$ is epic.
We have $Kerg\mid_{X}=q(Kerf\mid_{A})\subseteq q(IA)=IX$. Now we
only show that $Kerg\mid_{X}$ is PSD in $X$. Assume that
$X=Kerg\mid_{X}+N, X\leq N$, then
$A=h(Kerg\mid_{X})+h(N)=Kerf\mid_{A}+h(N)$. Since $Kerf\mid_{A}$ is
PSD in $A$, there is a direct summand $Z$ of $A$ ($Z\subseteq
Kerf\mid_{A})$ such that $A=Z\oplus h(N)$. We know that $h\mid_{X}:
X\rightarrow A$ is isomorphic, so $X=h\mid_{X}^{-1}(Z)\oplus N$. It
is easy to verify that $h\mid_{X}^{-1}(Z)\subseteq Kerg\mid_{X}$, as
required.
\end{proof}
\begin{lem} Let $I$ be an ideal of a ring $R$, $M$ is a projective
$R$-module and $N\leq M$. Consider the following conditions:
\begin{enumerate}
\item $M/N$ has a projective $I$-cover.
\item $M=Y\oplus X, Y\leq N, X\cap N\leq IM$.
\end{enumerate}
Then $(1)\Rightarrow (2), (2)\Rightarrow (1)$ if $M$ is PSD for $I$.
\end{lem}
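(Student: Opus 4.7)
The plan is to treat the two implications separately, using the preceding lemmas as the main tools.

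For $(1)\Rightarrow(2)$, let $f:P\to M/N$ be the given projective $I$-cover and let $\pi:M\to M/N$ be the natural projection. Since $M$ is projective and $\pi$ is epic, I would invoke Lemma 2.8 with $(P,f)$ playing the role of the projective $I$-cover and $(Q,g)=(M,\pi)$ playing the role of the second projective epimorphism. This hands me decompositions $P=A\oplus B$ and $M=X\oplus Y$ with $\pi|_X:X\to M/N$ a projective $I$-cover and $Y\subseteq\ker\pi=N$. From $\pi|_X$ being a projective $I$-cover we automatically get $X\cap N=\ker(\pi|_X)\subseteq IX\subseteq IM$, so rewriting $M=Y\oplus X$ is exactly the decomposition required in (2). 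No further work is needed here; all the technical content has already been packed into Lemma 2.8.

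For $(2)\Rightarrow(1)$, assume $M$ is PSD for $I$ and $M=Y\oplus X$ with $Y\leq N$ and $X\cap N\leq IM$. I would again consider $\pi|_X:X\to M/N$ and verify the four conditions for it to be a projective $I$-cover. Projectivity is free because $X$ is a summand of the projective module $M$. Surjectivity is immediate: for any $m\in M$, write $m=y+x$ with $y\in Y\leq N$, so $m+N=x+N$. For $\ker(\pi|_X)=X\cap N$, I must upgrade the hypothesis $X\cap N\leq IM$ to $X\cap N\leq IX$; here I use $IM=IX\oplus IY$ (which follows from $M=X\oplus Y$), so that any element of $X\cap IM$, written as a sum from $IX$ and $IY$, actually lives in $IX$ because $X\cap Y=0$. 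Finally, the PSD condition on $\ker(\pi|_X)$ inside $X$ follows from the standing assumption: $X\cap N\leq IM$ is PSD in $M$ by hypothesis, and since $X$ is a direct summand of $M$ containing $X\cap N$, Lemma 2.2 transfers this PSD property from $M$ to $X$.

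The main obstacle I anticipate is the step $X\cap N\subseteq IX$ in the reverse direction. The hypothesis only gives the weaker inclusion $X\cap N\subseteq IM$, and one needs to exploit the direct sum $M=X\oplus Y$ together with the formula $I(X\oplus Y)=IX\oplus IY$ to pin the intersection down inside $IX$. Everything else is bookkeeping or a direct appeal to Lemmas 2.2 and 2.8.
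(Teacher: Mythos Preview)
Your proposal is correct and follows essentially the same route as the paper: Lemma~2.8 applied to the pair $(P,f)$ and the canonical map $M\to M/N$ for $(1)\Rightarrow(2)$, and the restricted map $X\to M/N$ together with Lemma~2.2 for $(2)\Rightarrow(1)$. In fact your write-up is more careful than the paper's, which simply asserts the reverse implication ``by Lemma~2.2 and assumptions'' without spelling out the step $X\cap N\subseteq IX$ that you correctly isolate via $IM=IX\oplus IY$.
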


\begin{proof} $``(1)\Rightarrow (2)"$ Let $f: P\rightarrow M/N$ be a
projective $I$-cover and $g: M\rightarrow M/N$ be the canonical
epimorphism. By Lemma 2.8, $M=Y\oplus X, Y\subseteq Kerg=N$ and
$Kerg\mid_{X}: X\rightarrow M/N$ is a projective $I$-cover, and so
$Kerg\mid_{X}=X\cap N\subseteq IX\subseteq IM$.

$``(2)\Rightarrow (1)"$ Let $f: X\rightarrow M/N$ with $f(x)=x+N$.
It is easy to see that $f: X\rightarrow M/N$ is a projective
$I$-cover by Lemma 2.2 and assumptions.
\end{proof}

With Lemma 2.9, we can give the following characterization of
$I$-semiregular rings related to projective $I$-covers.
\begin{thm} Let $I$ be an ideal of $R$. Consider the following
conditions:
\begin{enumerate}
\item Every finitely presented $R$-module has a projective
$I$-cover.
\item For every finitely generated left ideal $K$ of $R$, $R/K$ has
a projective $I$-cover.
\item Every cyclically presented left $R$-module has a projective
$I$-cover.
\item $R$ is $I$-semiregular.
\end{enumerate}
Then $(1)\Rightarrow (2)\Rightarrow (3)\Rightarrow (4)$,
$(4)\Rightarrow (1)$ if $R$ is a left PSD ring for $I$.

\end{thm}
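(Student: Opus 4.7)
The plan is to dispatch the forward chain $(1)\Rightarrow(2)\Rightarrow(3)\Rightarrow(4)$ by direct reductions to the definitions and Lemma 2.9, and then to reduce the harder converse $(4)\Rightarrow(1)$ to Lemma 2.9 via an inductive decomposition inside the free module $R^n$.

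For $(1)\Rightarrow(2)$, any finitely generated left ideal $K$ fits into $0\to K\to R\to R/K\to 0$, which presents $R/K$ finitely; hence (1) furnishes a projective $I$-cover. For $(2)\Rightarrow(3)$, every cyclically presented module is of the form $R/Ra$ with $Ra$ cyclic, thus finitely generated. For $(3)\Rightarrow(4)$, given $r\in R$ the module $R/Rr$ is cyclically presented and, by (3), admits a projective $I$-cover; I then invoke the $(1)\Rightarrow(2)$ direction of Lemma 2.9 with $M=R$ and $N=Rr$ to obtain $R=Y\oplus X$ with $Y\subseteq Rr$ and $X\cap Rr\subseteq IR$, and since $Y$ is a direct summand of the projective module $R$ it is projective, i.e.\ $r$ is $I$-semiregular.

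For $(4)\Rightarrow(1)$ under the PSD hypothesis, write a finitely presented $M$ as $R^n/K$ with $K$ finitely generated. By Corollary 2.4 the free module $R^n$ is PSD for $I$, so the $(2)\Rightarrow(1)$ direction of Lemma 2.9 reduces the task to producing a decomposition
\[
R^n=Y\oplus X,\qquad Y\subseteq K\ \text{projective},\qquad X\cap K\subseteq IR^n.
\]
I intend to construct this by induction on the number $m$ of generators of $K$. The base case $K=Rx$ will leverage the element-wise $I$-semiregularity of $R$ applied to the coordinates of $x\in R^n$, assembling projective summands into a single direct summand of $R^n$ that lies inside $Rx$ and has residual intersection $Rx\cap Q_1\subseteq IR^n$. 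For the inductive step $K=Rx_1+\cdots+Rx_m$, apply the base case to $x_1$ to obtain $R^n=P_1\oplus Q_1$ with $P_1\subseteq Rx_1\subseteq K$ and $Rx_1\cap Q_1\subseteq IR^n$, then apply the inductive hypothesis inside the projective module $Q_1$ to the images of $x_2,\ldots,x_m$, and paste the resulting summands to obtain the desired decomposition.

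The principal obstacle is the base case of that induction. Since $I$-semiregularity is defined element-wise only for $_RR$, transferring it to an element $x=(r_1,\ldots,r_n)\in R^n$ requires careful bookkeeping: the individual coordinate summands from the decompositions for $r_1,\ldots,r_n$ must be recombined into a single direct summand of $R^n$ that lies inside $Rx$ rather than merely inside the coordinate sum, while keeping the residual intersection contained in $IR^n$. The PSD hypothesis enters only once, at the very end, where it powers the $(2)\Rightarrow(1)$ direction of Lemma 2.9 and converts the algebraic decomposition of $R^n$ into a genuine projective $I$-cover of $M$.
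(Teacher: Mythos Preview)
Your outline matches the paper's route exactly: the paper disposes of $(1)\Rightarrow(2)\Rightarrow(3)\Rightarrow(4)$ trivially, invokes Lemma~2.9 for $(3)\Rightarrow(4)$, and for $(4)\Rightarrow(1)$ simply cites the analogous result in~[1], whose content is precisely the decomposition $R^n=Y\oplus X$ with $Y\subseteq K$ and $X\cap K\subseteq IR^n$ that you aim to build by induction on the number of generators of $K$. So at the level of strategy there is nothing to correct.

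Where your sketch is thin is the base case, and the phrase ``applied to the coordinates of $x$'' suggests a plan that will not work as stated. If $x=(r_1,\ldots,r_n)$ and you apply $I$-semiregularity to each $r_i$ separately, the resulting projective summands live in the coordinate copies of $R$, i.e.\ inside $Rr_1\times\cdots\times Rr_n$, and there is no mechanism for ``assembling'' them into a summand of the diagonal cyclic module $Rx$. The argument that actually works (this is what~[1] and Alkan--\"Ozcan~[2] do) is \emph{iterative}, not parallel: from $I$-semiregularity of $r_1$ one extracts an idempotent $e=br_1$ with $brb=b$ and $r_1-r_1e\in I$, observes that $R(bx)\subseteq Rx$ is a projective direct summand of $R^n$ with complement $Q_1\cong R(1-e)\oplus R^{n-1}$, and then checks that the projection of $x$ into $Q_1$ has first coordinate in $I$, so the problem reduces to an element of $R^{n-1}$ (up to an $I$-perturbation which is harmless for the final containment). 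In other words, the base case is itself an induction on $n$, proving that finite direct sums of $I$-semiregular modules are $I$-semiregular; once you have that, your inductive step on $m$ goes through, though you should also note that the verification of $K\cap Q_2\subseteq IR^n$ in that step requires projecting once more onto $Q_2$ and using that $R$-linear projections carry $IQ_1$ into $IQ_2$.
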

\begin{proof} By Lemma 2.9, the rest is similar to [1, Theorem 3.11].
\end{proof}

If $I=\delta{(_{R}R)}$ or $J{(R)}$, then $R$ is a left PSD ring for
$I$, and hence Theorem 2.10  gives the characterizations of
$\delta$-semiregular rings [20] and semiregular rings [10]. Since if
$R$ is $Z(_{R}R)$-semiregular, then $Z(_{R}R)=J(R)\subseteq
\delta(_{R}R)$, we have the following result.

\begin{cor} The following statements are equivalent for a ring $R$.
\begin{enumerate}
\item $R$ is $Z(_{R}R)$-semiregular.
\item Every cyclically presented left $R$-module has a projective
$Z(_{R}R)$-cover.
\item Every finitely presented $R$-module has a projective
$Z(_{R}R)$-cover.
\item For every finitely generated left ideal $K$ of $R$, $R/K$ has
a projective $Z(_{R}R)$-cover.

\end{enumerate}

\end{cor}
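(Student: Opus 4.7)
The plan is to deduce the corollary directly from Theorem 2.10 applied with $I = Z(_RR)$, using the observation recorded in the paragraph immediately preceding the corollary, namely that $Z(_RR)$-semiregularity forces $Z(_RR) = J(R)$.

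First I would note that Theorem 2.10, with $I = Z(_RR)$, gives unconditionally three of the four implications needed to close the cycle. Relabeling so that the theorem's conditions (1), (2), (3), (4) correspond respectively to the corollary's (3), (4), (2), (1), the theorem's $(1) \Rightarrow (2) \Rightarrow (3) \Rightarrow (4)$ translates immediately into the corollary's $(3) \Rightarrow (4) \Rightarrow (2) \Rightarrow (1)$. This part requires no new ingredient.

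The only implication that needs additional work is $(1) \Rightarrow (3)$ of the corollary, which corresponds to the theorem's conditional $(4) \Rightarrow (1)$. This is exactly where the left PSD hypothesis is required. Here I would invoke the observation stated just before the corollary: if $R$ is $Z(_RR)$-semiregular, then $Z(_RR) = J(R)$. Combined with the remark immediately after Theorem 2.10 that $R$ is automatically a left PSD ring for $J(R)$, this shows that under assumption (1) the ring $R$ is a left PSD ring for $Z(_RR)$, so the conditional implication in Theorem 2.10 applies and yields (3).

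The main (and indeed only) subtle point is verifying the PSD hypothesis, since being PSD is not automatic for an arbitrary ideal and in particular is not obvious for $Z(_RR)$. The collapse $Z(_RR) = J(R)$ under the standing assumption (1) is the single decisive input that converts the general theorem into a genuine equivalence in this special case; beyond this bookkeeping no additional computation is needed.
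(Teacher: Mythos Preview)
Your proposal is correct and follows exactly the paper's own argument: the three unconditional implications come straight from Theorem~2.10, and the remaining implication is obtained from the observation (stated just before the corollary) that $Z(_RR)$-semiregularity forces $Z(_RR)=J(R)$, whence $R$ is a left PSD ring for $Z(_RR)$. There is nothing to add.
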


Since if $I\leq Soc(_{R}R)$, then $R$ is a left PSD ring for $I$,
and hence we have

\begin{cor} The following statements are equivalent for a ring $R$.
\begin{enumerate}
\item $R$ is $Soc(_{R}R)$-semiregular.
\item Every cyclically presented left $R$-module has a projective
$Soc(_{R}R)$-cover.
\item Every finitely presented $R$-module has a projective
$Soc(_{R}R)$-cover.
\item For every finitely generated left ideal $K$ of $R$, $R/K$ has
a projective $Soc(_{R}R)$-cover.

\end{enumerate}

\end{cor}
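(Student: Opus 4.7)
The plan is to reduce the statement immediately to Theorem 2.10 by specializing $I = \mathrm{Soc}(_RR)$. The implications $(1) \Rightarrow (2) \Rightarrow (3) \Rightarrow (4)$ of Theorem 2.10 translate, after relabeling, to $(3) \Rightarrow (4) \Rightarrow (2) \Rightarrow (1)$ of the present corollary, and they require no additional hypothesis on the ideal. The only remaining implication, namely $(1) \Rightarrow (3)$ here (that is, $(4) \Rightarrow (1)$ of Theorem 2.10), is conditional on $R$ being a left PSD ring for the ideal in question. Thus the whole task reduces to verifying that $R$ is a left PSD ring for $I = \mathrm{Soc}(_RR)$, exactly as the paragraph just before the corollary asserts.

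To establish this PSD property, I would fix a finitely generated free left $R$-module $F$ and an arbitrary submodule $N$ of $IF$. The key observation is that $IF \subseteq \mathrm{Soc}(F)$, because socle commutes with finite direct sums and $I \leq \mathrm{Soc}(_RR)$; in particular $N$ is semisimple. Now given any decomposition $F = N + X$ with $X \leq F$, semisimplicity of $N$ lets me split the submodule $N \cap X$ off inside $N$: write $N = (N \cap X) \oplus S$. A short check then yields $F = S \oplus X$, since $F = S + (N \cap X) + X = S + X$ and $S \cap X \subseteq S \cap (N \cap X) = 0$. Because $F$ is projective, the direct summand $S$ is projective, and $S \leq N$, so $S$ serves as the projective summand demanded by Definition 2.1. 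This shows $N$ is PSD in $F$, and hence $R$ is left PSD for $\mathrm{Soc}(_RR)$.

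With the PSD hypothesis in hand, Theorem 2.10 applied to $I = \mathrm{Soc}(_RR)$ yields the full equivalence of $(1)$--$(4)$. The only genuinely new content is the semisimplicity observation $IF \subseteq \mathrm{Soc}(F)$; everything else is a direct specialization of the theorem, in the same spirit as the $\delta$-semiregular and $J(R)$-semiregular specializations noted by the author after Theorem 2.10. Consequently there is no substantial obstacle: the main (minor) point to watch is simply checking that the three unconditional implications of Theorem 2.10 are renumbered correctly to match the order in which the conditions are listed in the corollary.
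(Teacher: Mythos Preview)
Your approach is exactly the paper's: the author simply states before the corollary that if $I\leq \mathrm{Soc}(_RR)$ then $R$ is a left PSD ring for $I$, and then invokes Theorem 2.10, so the corollary follows immediately once one checks the relabeling you carried out. Your semisimplicity argument for the PSD property (splitting $N\cap X$ off inside $N$) is the natural verification of the assertion the paper leaves unproved, and it is correct.
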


 Next we shall consider the characterizations of $I$-semiperfect
 rings.
\begin{thm} Let $I$ be an ideal of $R$. Consider the following
conditions:
\begin{enumerate}
\item Every finitely generated $R$-module has a projective
$I$-cover.
\item Every factor module of $_{R}R$ has
a projective $I$-cover.
\item For every countably generated left ideal $K$ of $R$, $R/K$ has
a projective $I$-cover.
\item $R$ is $I$-semiperfect.
\item Every simple $R$-module has a projective
$I$-cover.
\item Every simple factor module of $_{R}R$ has a projective
$I$-cover.
\end{enumerate}
Then $(1)\Rightarrow (2)\Rightarrow (3)\Rightarrow (4)$ and
$(1)\Rightarrow (5)\Rightarrow (6)$, $(4)\Rightarrow (1)$ if $R$ is
a left PSD ring for $I$; and $(6)\Rightarrow (4)$ if $I$ is PSD in
$_{R}R$.

\end{thm}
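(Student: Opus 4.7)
The plan is to handle the eight implications in three groups: the set-inclusion implications, the two unconditional nontrivial ones, and the two implications governed by PSD hypotheses.

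The implications $(1)\Rightarrow(2)$, $(2)\Rightarrow(3)$, $(1)\Rightarrow(5)$, and $(5)\Rightarrow(6)$ are straightforward class-containment observations and would take at most a line each: every factor module of $_RR$ is cyclic and hence finitely generated; every factor $R/K$ with $K$ countably generated is a factor of $_RR$; every simple module is finitely generated; every simple factor of $_RR$ is a simple module. So each later condition is a restriction of an earlier one.

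For $(3)\Rightarrow(4)$, I would fix a left ideal $K$ of $R$ and aim to produce an idempotent $e\in R$ with $Re\leq K$ and $K\cap R(1-e)\leq I$. When $K$ is countably generated, condition (3) supplies a projective $I$-cover of $R/K$ and Lemma 2.9 direction $(1)\Rightarrow(2)$ (which needs no PSD hypothesis) yields such an $e$ at once. To handle an arbitrary $K$, I would apply Zorn's lemma to the poset of orthogonal idempotent families inside $K$, using that the sum of a chain of orthogonal idempotents gives a consistent upper bound; maximality combined with the countably generated case applied to the residual intersection inside $R(1-e)$ produces the desired decomposition. For $(4)\Rightarrow(1)$ under the PSD hypothesis, I would write a finitely generated module as $M=F/N$ with $F=R^n$ free, first invoke the direct-sum stability of $I$-semiperfect modules (the argument in [11] transfers verbatim) to conclude that $F$ is $I$-semiperfect, and apply its definition to $N$ to get $F=Y\oplus X$ with $Y\leq N$ and $N\cap X\leq IF$. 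Corollary 2.4 makes $F$ PSD for $I$, so Lemma 2.9 direction $(2)\Rightarrow(1)$ delivers a projective $I$-cover of $M$.

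The implication $(6)\Rightarrow(4)$ under the hypothesis that $I$ is PSD in $_RR$ is the main obstacle. Given a proper left ideal $K$, my first step is to pick a maximal left ideal $\mathfrak m\supseteq K$ and use (6) together with Lemma 2.9 to extract an idempotent $e_{\mathfrak m}$ satisfying $Re_{\mathfrak m}\leq\mathfrak m$ and $\mathfrak m\cap R(1-e_{\mathfrak m})\leq I$. The delicate point is that $Re_{\mathfrak m}$ sits in $\mathfrak m$, not necessarily in $K$, and the intersection bound is only with $\mathfrak m$. This is exactly where the PSD hypothesis on $I$ is used: because intersections forced into $I$ are PSD in $_RR$, they split off projective summands of $R$ lying inside $K$, which allows one to rotate $e_{\mathfrak m}$ into an idempotent contained in $K$ with the intersection bound sharpened to $K\cap R(1-e)\leq I$. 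A Zorn-style iteration on orthogonal idempotents inside $K$ then assembles the full $I$-semiperfect decomposition. The hard part will be making this rotation precise at each stage and verifying that the transfinite iteration terminates with the intersection landing in $I$ rather than in some larger projective radical; that is the heart of the proof.
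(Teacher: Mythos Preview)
The paper's own proof is a single line: ``Similar to [1, Theorem~4.8] by Lemma~2.9.'' So the real comparison is between your outline and the argument in Alkan--Nicholson--\"Ozcan~[1]. Your treatment of the four class-inclusion implications and of $(4)\Rightarrow(1)$ via Corollary~2.4 and Lemma~2.9 is correct and matches the intended route. The difficulties lie in $(3)\Rightarrow(4)$ and $(6)\Rightarrow(4)$.

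For $(3)\Rightarrow(4)$ your Zorn argument has a genuine gap. A maximal family of pairwise orthogonal idempotents inside $K$ can perfectly well be infinite (think of an infinite product of fields), and in a general ring there is no way to ``sum'' such a family to obtain a single idempotent~$e$; your phrase ``the sum of a chain of orthogonal idempotents gives a consistent upper bound'' conflates the set-theoretic union of a chain of families (which is the Zorn upper bound) with an algebraic sum (which need not exist). Without a single idempotent you cannot form $R(1-e)$ or speak of ``the residual intersection inside $R(1-e)$,'' so the final clause has no content. The argument in~[1] does not use Zorn on idempotent families at all; it runs a \emph{countable} inductive construction: assuming no decomposition works for $K$, one manufactures elements $a_1,a_2,\ldots\in K$ so that the countably generated ideal $K_0=\sum Ra_n$ witnesses the same failure, and then~(3) applied to $K_0$ yields a contradiction. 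The whole point of the hypothesis ``countably generated'' in~(3) is to feed exactly this construction, not to survive a transfinite limit.

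For $(6)\Rightarrow(4)$ you explicitly leave the essential step --- moving the idempotent $e_{\mathfrak m}$ from $\mathfrak m$ down into $K$ while tightening the intersection bound from $\mathfrak m\cap R(1-e_{\mathfrak m})\le I$ to $K\cap R(1-e)\le I$ --- as something still to be ``made precise,'' and you flag termination of the transfinite iteration as open. That is the heart of the implication, so as written this is not yet a proof. The route taken in~[1] avoids transfinite iteration: one uses the hypothesis that $I$ is PSD in $_RR$ together with~(6) to show that $R/I$ is semisimple and that the relevant idempotents lift modulo~$I$, after which the decomposition for an arbitrary $K$ is assembled from a \emph{finite} orthogonal set of idempotents coming from the (finitely many) simple summands of $R/I$. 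The PSD hypothesis is what forces this finiteness; your sketch never invokes it for that purpose.
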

\begin{proof} Similar to [1, Theorem 4.8] by Lemma 2.9.
\end{proof}

When $I=\delta(_{R}R)$ or $Soc(_{R}R)$ or $J(R)$, Theorem 2.13 gives
the characterizations of ($\delta$-, $Soc(_{R}R)$-) semiperfect
rings (See [20], [11], [10]).

\begin{cor} The following statements are equivalent for a ring $R$.
\begin{enumerate}
\item $R$ is $Z(_{R}R)$-semiperfect.
\item Every finitely generated $R$-module has a projective
$Z(_{R}R)$-cover.
\item Every factor module of $_{R}R$ has
a projective $Z(_{R}R)$-cover.
\item For every countably generated left ideal $K$ of $R$, $R/K$ has
a projective $Z(_{R}R)$-cover.
\item Every simple $R$-module has a projective
$Z(_{R}R)$-cover.
\item Every simple factor module of $_{R}R$ has a projective
$Z(_{R}R)$-cover.

\end{enumerate}

\end{cor}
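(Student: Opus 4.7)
The plan is to derive Corollary 2.14 from Theorem 2.13 applied at the ideal $I=Z(_{R}R)$. Since Theorem 2.13 supplies $(1)\Rightarrow(2)\Rightarrow(3)\Rightarrow(4)$ and $(1)\Rightarrow(5)\Rightarrow(6)$ unconditionally, Corollary 2.14 reduces to verifying its two conditional hypotheses at $I=Z(_{R}R)$: that $R$ is a left PSD ring for $Z(_{R}R)$ (needed for $(4)\Rightarrow(1)$) and that $Z(_{R}R)$ is PSD in $_{R}R$ (needed for $(6)\Rightarrow(4)$). Both become automatic once one has $Z(_{R}R)\subseteq J(R)$, because then $Z(_{R}R)P\subseteq J(R)P\subseteq\mathrm{Rad}(P)\ll P$ for every finitely generated projective $P$, so every submodule of $Z(_{R}R)P$ is small in $P$ and the PSD condition is witnessed trivially by the zero projective summand.

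For $(4)\Rightarrow(1)$ the inclusion $Z(_{R}R)\subseteq J(R)$ is free: as the remark preceding Corollary 2.11 records, any $Z(_{R}R)$-semiregular (and therefore any $Z(_{R}R)$-semiperfect) ring satisfies $Z(_{R}R)=J(R)$, so the unifying observation above gives the PSD hypothesis and Theorem 2.13 closes the arrow.

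For $(6)\Rightarrow(4)$ the inclusion $Z(_{R}R)\subseteq J(R)$ has to be extracted directly from the simple-cover hypothesis, and this is where the real work lies. Fix a maximal left ideal $M$, let $(P,f)$ be a projective $Z(_{R}R)$-cover of $R/M$, and pick $p\in P$ with $f(p)=1+M$; then $Rp+\ker f=P$, so PSD of $\ker f$ applied with $X=Rp$ yields a projective direct summand $Y$ of $P$ with $Y\subseteq\ker f$ and $P=Rp\oplus Y$, and $Rp$ is cyclic projective, say $Rp\cong Re$ for some idempotent $e\in R$. Since $\ker f$ is a maximal submodule of $P$ and $\ker f\subseteq Z(_{R}R)P\subseteq P$, the submodule $Z(_{R}R)P$ must equal either $\ker f$ or $P$. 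If $Z(_{R}R)P=P$, then projecting onto the $Rp$-summand gives $Z(_{R}R)p=Rp$, so $p=zp$ for some $z\in Z(_{R}R)$; transferring this across the isomorphism $Rp\cong Re$ gives $e=ze$, hence $\mathrm{ann}(e)\supseteq\mathrm{ann}(z)$ is essential, placing the idempotent $e$ in $Z(_{R}R)$, and then $Re\cap R(1-e)=0$ forces $e=0$, contradicting $f(p)\neq 0$. So $Z(_{R}R)P=\ker f$; applying $f$ gives $Z(_{R}R)(R/M)=0$, i.e.\ $Z(_{R}R)\subseteq M$. Intersecting over all maximal $M$ produces $Z(_{R}R)\subseteq J(R)$, whence the unifying observation supplies the PSD hypothesis Theorem 2.13 demands. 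The main obstacle is precisely this dichotomy step for $Z(_{R}R)P$: ruling out the case $Z(_{R}R)P=P$ without yet having any of the desired conclusions in hand, which is handled by the PSD-produced cyclic projective summand $Rp=Re$ together with the idempotent--annihilator characterization of $Z(_{R}R)$.
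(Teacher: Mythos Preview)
Your argument is correct and tracks the paper's intended derivation of Corollary 2.14 from Theorem 2.13 at $I=Z(_{R}R)$, using the fact (recorded just before Corollary 2.11) that $Z(_{R}R)$-semiregularity forces $Z(_{R}R)=J(R)$. The paper, however, gives no proof of Corollary 2.14 whatsoever, and in particular offers no justification for $(6)\Rightarrow(4)$: the equality $Z(_{R}R)=J(R)$ is only available once condition (4) is already in hand, so it handles $(4)\Rightarrow(1)$ but leaves $(6)\Rightarrow(4)$ unaddressed. Your direct extraction of $Z(_{R}R)\subseteq J(R)$ from the simple-cover hypothesis alone---splitting off a cyclic projective summand $Rp\cong Re$ via the PSD property of $\ker f$, then ruling out $Z(_{R}R)P=P$ by showing it would force $e=ze\in Z(_{R}R)$ and hence $e=0$---is a genuine addition that the paper does not supply. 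In outline your approach is the paper's, but in execution it is strictly more complete.
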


\section{\bf $I$-semiregular($I$-semiperfect) rings and locally projective $I$-covers}

Ding and Chen [5] and Xue [18] gave some characterizations of rings
by locally projective covers. Inspired by those, we introduce the
notion of locally projective $I$-covers and use it to characterize
$I$-semiregular and $I$-semiperfect rings in this section. Firstly,
we recall some definitions and facts. A module $P$ is called locally
projective [18, 21] in case it satisfies any of the following
equivalent condition: $(a)$ if $A$ and $B$ are modules, $g:
A\rightarrow B$ is an epimorphism and $f: P\rightarrow B$ is a
homomorphism then for every finitely generated (cyclic) submodule
$P_{0}$ of $P$ there is a homomorphism $h: P\rightarrow A$ such that
$f\mid_{P_{0}}=gh\mid_{P_{0}}$; $(b)$ if $M$ is a module and $f:
M\rightarrow P$ is an epimorphism then for every finitely generated
(cyclic) submodule $P_{0}$ of $P$ there is a homomorphism $h:
P\rightarrow M$ such that $fg\mid_{P_{0}}=1\mid_{P_{0}}$. Clearly,
every finitely generated locally projective module is projective.
The following facts are well known. (1) a direct sum of locally
projective modules is locally projective; (2) any direct summand of
a locally projective module is locally projectvie; (3) if $P$ is a
locally projective module, then (1) $Rad(P)=J(R)P$; (2) if
$Rad(P)=P$, then $P=0$. We also recall that a pair $(P, f)$ is
called a locally projective ($\delta$-) cover of $M$ if $P$ is
locally projective, $f$ is an epimorphism from $P$ to $M$ such that
$Kerf\ll P$ ($Kerf\ll_{\delta} P$).

\begin{defn} A pair $(P, f)$ is called a locally projective
$I$-cover of $M$ if $P$ is locally projective, $f$ is an epimorphism
from $P$ to $M$ such that $Kerf\leq IP$, and $Kerf$ is PSD in $P$.
\end{defn}

It is easy to see that a module $M$ has a locally projective
$0$-cover if and only if $M$ is locally projective.

\begin{prop} If each $f_{i}: P_{i}\rightarrow M_{i}, (i=1, 2,
\cdot\cdot\cdot, n)$ is a locally projective $I$-cover, then $\oplus
_{i=1}^{n}f_{i}: \oplus _{i=1}^{n}P_{i}\rightarrow \oplus
_{i=1}^{n}M_{i}$ is a locally projective $I$-cover.

\end{prop}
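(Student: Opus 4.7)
The plan is to mirror the proof of Proposition 2.6, replacing ``projective'' by ``locally projective'' wherever the properties of (locally) projective modules are used. The four conditions in Definition 3.1 must be checked for the map $\oplus_{i=1}^n f_i \colon \oplus_{i=1}^n P_i \to \oplus_{i=1}^n M_i$: local projectivity of the source, surjectivity, containment of the kernel in $I(\oplus P_i)$, and the PSD condition on the kernel.

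First I would invoke fact (1) recalled just before Definition 3.1, namely that a direct sum of locally projective modules is locally projective, to conclude that $P := \oplus_{i=1}^n P_i$ is locally projective. Surjectivity of $\oplus f_i$ is immediate from the surjectivity of each $f_i$. For the kernel, I would use
\[
\operatorname{Ker}(\oplus f_i) \;=\; \oplus_{i=1}^n \operatorname{Ker} f_i \;\subseteq\; \oplus_{i=1}^n I P_i \;=\; I\bigl(\oplus_{i=1}^n P_i\bigr),
\]
where the middle inclusion uses that each $(P_i,f_i)$ is a locally projective $I$-cover.

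For the PSD condition, I would argue by induction on $n$ using Proposition 2.5. The base case $n=1$ is the hypothesis. For the inductive step, assuming $\operatorname{Ker}(\oplus_{i=1}^{n-1} f_i)$ is PSD in $\oplus_{i=1}^{n-1} P_i$ and $\operatorname{Ker} f_n$ is PSD in $P_n$, Proposition 2.5 applied to the decomposition
\[
\oplus_{i=1}^n P_i \;=\; \bigl(\oplus_{i=1}^{n-1} P_i\bigr) \oplus P_n
\]
yields that $\operatorname{Ker}(\oplus_{i=1}^{n-1} f_i) \oplus \operatorname{Ker} f_n = \operatorname{Ker}(\oplus_{i=1}^n f_i)$ is PSD in $\oplus_{i=1}^n P_i$.

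There is no real obstacle here: Proposition 2.5 is a statement purely about submodules and projective summands and does not require the ambient module to be projective, so it applies equally well in the locally projective setting. The only point that needed separate justification is the local projectivity of the direct sum, and that is precisely the recalled fact (1). Hence a single line noting the inductive application of Proposition 2.5 together with fact (1) suffices, exactly parallel to the proof of Proposition 2.6.
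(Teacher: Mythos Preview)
Your argument is correct and is precisely the intended one: the paper states Proposition~3.2 without proof, evidently because it is the locally projective analogue of Proposition~2.7, whose one-line proof (``By Proposition~2.5 and the definition of projective $I$-covers'') carries over verbatim once you add the recalled fact that direct sums of locally projective modules are locally projective. One small correction: what you call ``Proposition~2.6'' is actually Definition~2.6 in the paper's numbering; the result you are mirroring is Proposition~2.7.
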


\begin{prop} A module $M$ has a locally projective $J(R)$-cover if
and only if $M$ has a locally projective cover.
\end{prop}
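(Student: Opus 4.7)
The plan is to prove both implications directly from the definitions, using only the two facts about locally projective modules that the excerpt cites: for any locally projective $P$, one has $\operatorname{Rad}(P)=J(R)P$, and $\operatorname{Rad}(P)=P$ forces $P=0$.

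For the ``if'' direction, suppose $(P,f)$ is a locally projective cover, i.e.\ $\operatorname{Ker}f \ll P$. Then $\operatorname{Ker}f \le \operatorname{Rad}(P) = J(R)P$, which is the inclusion required by Definition~3.1. To check that $\operatorname{Ker}f$ is PSD in $P$, suppose $\operatorname{Ker}f + X = P$ for some submodule $X$. Smallness of $\operatorname{Ker}f$ forces $X=P$, so taking $S=0$ (which is vacuously a projective summand of $P$ contained in $\operatorname{Ker}f$) gives $P = S \oplus X$. Hence $(P,f)$ is a locally projective $J(R)$-cover.

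For the ``only if'' direction, suppose $(P,f)$ is a locally projective $J(R)$-cover; I want to show $\operatorname{Ker}f \ll P$. Let $X \le P$ satisfy $\operatorname{Ker}f + X = P$. By the PSD property, there exists a projective direct summand $S$ of $P$ with $S \le \operatorname{Ker}f$ and $P = S \oplus X$. From the decomposition $P=S\oplus X$ it follows that $J(R)P = J(R)S \oplus J(R)X$. Since $S \le \operatorname{Ker}f \le J(R)P$, every $s\in S$ can be written as $s=s'+x'$ with $s'\in J(R)S\le S$ and $x'\in J(R)X\le X$; because $S\cap X=0$, we must have $x'=0$, hence $s=s'\in J(R)S$. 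Therefore $S = J(R)S = \operatorname{Rad}(S)$. Since $S$ is projective, and so locally projective, the cited fact forces $S=0$, whence $X=P$. This shows $\operatorname{Ker}f \ll P$.

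The only step requiring any care is the decomposition $J(R)(S\oplus X)=J(R)S\oplus J(R)X$ and the resulting containment $S\cap J(R)P \subseteq J(R)S$; everything else is an immediate unpacking of the definitions. I would expect that small manipulation to be the only potential sticking point, and it is routine once the direct-sum decomposition is used to separate the two components.
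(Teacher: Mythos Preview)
Your proof is correct and follows essentially the same route as the paper's. Both directions match: the paper dismisses the ``if'' direction as clear (your explicit check via $S=0$ is fine), and for the ``only if'' direction both arguments use the PSD property to extract a projective summand $S\le\operatorname{Ker}f$ with $P=S\oplus X$, show $S=\operatorname{Rad}(S)$, and conclude $S=0$. The only cosmetic difference is that the paper reaches $\operatorname{Rad}(S)=S$ via the decomposition $\operatorname{Rad}(P)=\operatorname{Rad}(S)\oplus\operatorname{Rad}(X)$, whereas you use the equivalent identity $J(R)(S\oplus X)=J(R)S\oplus J(R)X$ together with $\operatorname{Rad}(S)=J(R)S$; both are the same computation under the standing fact $\operatorname{Rad}=J(R)\cdot$ for (locally) projective modules.
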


\begin{proof} $``\Leftarrow"$ is clear.

$``\Rightarrow"$ Let $f: P\rightarrow M$ be a locally projective
$J(R)$-cover. Then $Kerf\subseteq J(R)P$, $Kerf$ is PSD in $P$. Next
we shall show that $Kerf\ll P$. Let $Kerf+L=P, L\leq M$. Since
$Kerf$ is PSD in $P$, there is a summand $K$ of $P$ with $K\leq
Kerf$ such that $K\oplus L=P$. Since $Rad(K)\oplus Rad(L)=Rad(P)$
and $K\subseteq Rad(P)$, $Rad(K)=K$. Since $K$ is locally
projective, $K=0$, and hence $L=P$, as desired.
\end{proof}

The following lemma is a key result of this section.

\begin{lem} Let $f: P\rightarrow M$ be a locally projective
$I$-cover. If $M$ is finitely generated, then $f: P\rightarrow M$ is
a projective $I$-cover.
\end{lem}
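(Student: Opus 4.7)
The plan is to exploit finite generation of $M$ to produce a finitely generated direct summand of $P$ that carries all of $P$ modulo $\ker f$, then appeal to the fact that a finitely generated locally projective module is projective.

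First, since $M$ is finitely generated, pick generators $m_1, \dots, m_n$ of $M$ and lift them to $p_1, \dots, p_n \in P$ with $f(p_i) = m_i$. Setting $P_0 = \sum_{i=1}^n Rp_i$, the submodule $P_0$ is finitely generated and $f(P_0) = M$, whence $P = P_0 + \ker f$.

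Next, apply the hypothesis that $\ker f$ is PSD in $P$ to the decomposition $P = \ker f + P_0$: there exists a projective direct summand $K$ of $P$ with $K \subseteq \ker f$ and $P = K \oplus P_0$. Since $P_0$ is now a direct summand of the locally projective module $P$, $P_0$ is itself locally projective (by the preservation of local projectivity under direct summands recalled in the text). Being finitely generated, $P_0$ is therefore projective, and hence $P = K \oplus P_0$ is a direct sum of two projective modules, so $P$ is projective.

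Finally, since the conditions $\ker f \subseteq IP$ and $\ker f$ PSD in $P$ are already part of the locally projective $I$-cover hypothesis, the now-projective $P$ together with $f$ satisfies Definition~2.6, so $(P, f)$ is a projective $I$-cover of $M$. The only non-routine ingredient is the PSD property, which is precisely what lets us convert the decomposition $P = \ker f + P_0$ into a genuine direct sum and so transfer projectivity from $P_0$ up to $P$.
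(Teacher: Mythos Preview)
Your proof is correct and follows essentially the same route as the paper: lift finitely many generators of $M$ to obtain a finitely generated $P_0\leq P$ with $P=P_0+\ker f$, use the PSD property of $\ker f$ to split $P=K\oplus P_0$ with $K$ projective, and conclude that $P_0$ (hence $P$) is projective because finitely generated locally projective modules are projective. The only difference is that you spell out the lifting of generators and the final verification of Definition~2.6 a bit more explicitly than the paper does.
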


\begin{proof} It suffices to prove that $P$ is projective.
Since $f: P\rightarrow M$ is a locally projective $I$-cover and $M$
is finitely generated, there is a finitely generated submodule
$P_{0}$ of $P$ such that $P_{0}+Kerf=P$. Note that $Kerf$ is PSD in
$P$, there is a projective summand $K$ of $P$ with $K\subseteq Kerf$
such that $P_{0}\oplus K=P$. Since $P$ is locally projective,
$P_{0}$ is locally projective. Since $P_{0}$ is finitely generated,
$P_{0}$ is projective. Thus $P$ is projective, as required.

\end{proof}

\begin{thm} Let $I$ be an ideal of $R$. Consider the following
conditions:
\begin{enumerate}
\item Every finitely presented $R$-module has a locally projective
$I$-cover.
\item For every finitely generated left ideal $K$ of $R$, $R/K$ has
a locally projective $I$-cover.
\item Every cyclically presented left $R$-module has a locally projective
$I$-cover.
\item $R$ is $I$-semiregular.
\end{enumerate}
Then $(1)\Rightarrow (2)\Rightarrow (3)\Rightarrow (4)$,
$(4)\Rightarrow (1)$ if $R$ is a left PSD ring for $I$.

\end{thm}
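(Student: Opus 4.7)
The plan is to reduce everything to Theorem 2.10 via Lemma 3.4, which converts a locally projective $I$-cover of a finitely generated module into a genuine projective $I$-cover. With that bridge in hand, the present theorem is essentially Theorem 2.10 dressed up in the weaker language of locally projective $I$-covers.

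For the chain $(1)\Rightarrow(2)\Rightarrow(3)$ I will just unfold definitions. If $K\leq {}_{R}R$ is finitely generated, the sequence $0\to K\to R\to R/K\to 0$ exhibits $R/K$ as finitely presented, so (1) applies; and a cyclically presented module is by definition of the form $R/Ra$ where $Ra$ is principal, hence a finitely generated left ideal, so (2) applies.

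The substantive step is $(3)\Rightarrow(4)$. Given $a\in R$, I will take a locally projective $I$-cover $f\colon P\to R/Ra$ provided by (3). Because $R/Ra$ is finitely generated (even cyclic), Lemma 3.4 promotes $f$ to a projective $I$-cover. Feeding this into Lemma 2.9 with $M={}_{R}R$ and $N=Ra$ yields a decomposition $R=Y\oplus X$ with $Y\subseteq Ra$ and $X\cap Ra\subseteq IR=I$; since $Y$ is a direct summand of the projective module ${}_{R}R$, it is projective, which is exactly the $I$-semiregular decomposition witnessing $a$.

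Finally, $(4)\Rightarrow(1)$ under the PSD hypothesis is immediate: Theorem 2.10 already gives that every finitely presented $R$-module has a projective $I$-cover, and every projective module is locally projective, so any such cover is \emph{a fortiori} a locally projective $I$-cover. The only conceptual point to watch is the asymmetry in the cycle: Lemma 3.4 is what makes the implications going \emph{into} $I$-semiregularity work without any PSD assumption, whereas the return implication $(4)\Rightarrow(1)$ must still be gated by the PSD hypothesis inherited from Theorem 2.10.
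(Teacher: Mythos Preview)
Your proposal is correct and follows essentially the same approach as the paper: the paper's own proof simply cites Lemma 3.4 and Theorem 2.10 for $(3)\Rightarrow(4)$ and Theorem 2.10 for $(4)\Rightarrow(1)$, while you unpack the former one step further by invoking Lemma 2.9 directly (which is exactly what Theorem 2.10 rests on). There is no substantive difference in strategy.
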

\begin{proof} $``(1)\Rightarrow (2)\Rightarrow (3)"$ are clear.

$``(3)\Rightarrow (4)"$ It follows by Lemma 3.4 and  Theorem 2.10.

$``(4)\Rightarrow (1)"$ is clear by Theorem 2.10.
\end{proof}

If $I=\delta{(_{R}R)}$ or $J{(R)}$, then $R$ is a left PSD ring for
$I$.

\begin{cor}  The following conditions are equivalent for a ring $R$.
\begin{enumerate}

\item $R$ is semiregular.

\item Every finitely presented $R$-module has a locally projective
cover.
\item For every finitely generated left ideal $K$ of $R$, $R/K$ has
a locally projective cover.
\item Every cyclically presented left $R$-module has a locally projective
cover.

\end{enumerate}

\end{cor}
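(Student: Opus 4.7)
The plan is to obtain this corollary as a direct specialization of Theorem~3.5 to the ideal $I=J(R)$, using Proposition~3.3 to translate between the two notions of locally projective cover, and using the definitional remark from the introduction that identifies semiregularity with $J(R)$-semiregularity.

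First I would verify the two ingredients needed to apply Theorem~3.5 with $I=J(R)$. The remark immediately preceding the corollary asserts that $R$ is a left PSD ring for $J(R)$, so the hypothesis of the implication $(4)\Rightarrow(1)$ in Theorem~3.5 is satisfied, giving us the full circle of equivalences among conditions (1)--(4) of that theorem. Next, from the introduction we have that $R$ is $J(R)$-semiregular if and only if $R$ is semiregular, so condition~(4) of Theorem~3.5 becomes condition~(1) of the corollary.

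Then I would use Proposition~3.3, which asserts that a module $M$ has a locally projective $J(R)$-cover if and only if it has a locally projective cover. Applying this term by term, conditions (1)--(3) of Theorem~3.5 specialized to $I=J(R)$ translate exactly into conditions (2)--(4) of the corollary: every finitely presented (resp.\ cyclically presented, resp.\ $R/K$ for $K$ finitely generated) module having a locally projective $J(R)$-cover is the same as having a locally projective cover.

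Putting these translations together yields the four-way equivalence. There is no real obstacle here, since all the work has been done in Theorem~3.5 and Proposition~3.3; the only thing to be careful about is to mention the remark that $R$ is a left PSD ring for $J(R)$ (which is what licenses the non-trivial implication $(4)\Rightarrow(1)$ in Theorem~3.5), and to invoke Proposition~3.3 uniformly to replace ``locally projective $J(R)$-cover'' with ``locally projective cover'' in each of the three module-theoretic conditions. The proof is therefore a one-line reduction: apply Theorem~3.5 with $I=J(R)$ and use Proposition~3.3.
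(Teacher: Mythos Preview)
Your proposal is correct and matches the paper's intended argument: the paper gives no explicit proof for this corollary, but its placement immediately after the remark that $R$ is a left PSD ring for $J(R)$ makes clear that one simply specializes Theorem~3.5 to $I=J(R)$ and invokes Proposition~3.3 to pass from locally projective $J(R)$-covers to locally projective covers, together with the identification of semiregularity with $J(R)$-semiregularity from the introduction. You have supplied exactly these ingredients.
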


\begin{cor}  The following conditions are equivalent for a ring $R$.
\begin{enumerate}

\item $R$ is $\delta$-semiregular.

\item Every finitely presented $R$-module has a locally projective
$\delta$-cover.
\item For every finitely generated left ideal $K$ of $R$, $R/K$ has
a locally projective $\delta$-cover.
\item Every cyclically presented left $R$-module has a locally projective
$\delta$-cover.

\end{enumerate}

\end{cor}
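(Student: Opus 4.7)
The plan is to derive Corollary~3.7 as a specialization of Theorem~3.5 to the ideal $I=\delta(_{R}R)$. Two ingredients are needed. First, $R$ is a left PSD ring for $I=\delta(_{R}R)$ (as noted in the sentence immediately preceding Corollary~3.6), so the hypothesis of the ``$(4)\Rightarrow(1)$'' direction of Theorem~3.5 is met, and the four conditions of that theorem are equivalent for this $I$. Second, one must translate the terminology: ``$R$ is $\delta(_{R}R)$-semiregular'' coincides with ``$R$ is $\delta$-semiregular'' by the remark in the preliminaries, and ``locally projective $\delta(_{R}R)$-cover'' must be identified with ``locally projective $\delta$-cover''.

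The nontrivial part is the second identification, which is a $\delta$-analogue of Proposition~3.3. Given a locally projective $\delta(_{R}R)$-cover $f:P\to M$, I would show $\ker f\ll_{\delta}P$ as follows. Suppose $\ker f+L=P$ with $P/L$ singular; by the PSD hypothesis there is a projective summand $K$ of $P$ with $K\subseteq\ker f$ and $K\oplus L=P$. Since $K\subseteq\delta(_{R}R)P=\delta(P)$ for locally projective $P$, one has $\delta(K)=K$, and since $K$ is locally projective this forces $K=0$, giving $L=P$. The reverse direction is straightforward: if $\ker f\ll_{\delta}P$, then $\ker f\subseteq\delta(P)=\delta(_{R}R)P$, and the PSD condition follows because any decomposition $\ker f+X=P$ with $X\neq P$ would contradict $\delta$-smallness on the singular quotients (so one takes $Y=0$ in the trivial cases).

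The principal obstacle is to establish, for a locally projective module $P$, the two $\delta$-counterparts of the Jacobson-radical facts invoked in Proposition~3.3: namely $\delta(P)=\delta(_{R}R)P$, and ``$\delta(K)=K$ implies $K=0$.'' Once these are verified, everything else is a mechanical transcription of the arguments in Proposition~3.3 and Theorem~3.5 with $\ll_{\delta}$ replacing $\ll$ and $\delta(_{R}R)$ replacing $J(R)$.
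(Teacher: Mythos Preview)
Your reduction to Theorem~3.5 with $I=\delta({}_RR)$ is sound, but the proposed $\delta$-analogue of Proposition~3.3 has a genuine error: the claim ``$\delta(K)=K$ implies $K=0$ for locally projective $K$'' is false. Any nonzero simple projective module $K$ (e.g.\ $Re$ for a primitive idempotent $e$) has no singular simple quotient, so $\delta(K)=K$ while $K\neq 0$. The correct statement for projective $K$ is only that $\delta(K)=K$ forces $K$ to be projective semisimple (compare the $\delta$-case in the proof of Theorem~5.2). Your argument can be repaired by noting that the summand $K\cong P/L$ you obtain is also \emph{singular}, and a simple projective module is never singular, so a projective semisimple singular module is zero; but this is no longer a mechanical transcription of Proposition~3.3. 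Your other counterpart, $\delta(P)=\delta({}_RR)P$ for a merely locally projective $P$, is likewise not available in the paper and would need independent proof. (Your sketch of the reverse direction is also imprecise: $\ker f\ll_\delta P$ does not give $Y=0$; one must invoke [20, Lemma~1.2] to produce a projective semisimple $Y$ with $P=Y\oplus X$.)

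The paper avoids all of this by a different route. It does not attempt a $\delta$-version of Proposition~3.3 for arbitrary locally projective modules; instead it uses that every module in conditions~(2)--(4) is finitely generated. If $f:P\to M$ is a locally projective $\delta$-cover of such an $M$, then $\ker f\ll_\delta P$ makes $\ker f$ PSD in $P$ (via [20, Lemma~1.2]), and the argument of Lemma~3.4 shows that $P$ is already projective. Thus $f$ is a projective $\delta$-cover, which coincides with a projective $\delta({}_RR)$-cover by the remark after Definition~2.6, and Theorem~2.10 (together with the fact that $R$ is a left PSD ring for $\delta({}_RR)$) closes the cycle. This route never needs to compare $\delta(P)$ with $\delta({}_RR)P$ for a module that is only locally projective, which is precisely the obstacle you flagged.
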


\begin{proof} It follows by Theorem 2.10 and Lemma 3.4.
\end{proof}

 Since if $R$ is $Z(_{R}R)$-semiregular, then
$Z(_{R}R)=J(R)\subseteq \delta(_{R}R)$, we have the following
result.

\begin{cor} The following statements are equivalent for a ring $R$.
\begin{enumerate}
\item $R$ is $Z(_{R}R)$-semiregular.
\item Every cyclically presented left $R$-module has a locally projective
$Z(_{R}R)$-cover.
\item Every finitely presented $R$-module has a locally projective
$Z(_{R}R)$-cover.
\item For every finitely generated left ideal $K$ of $R$, $R/K$ has
a locally projective $Z(_{R}R)$-cover.

\end{enumerate}

\end{cor}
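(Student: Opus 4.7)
The plan is to bootstrap from Corollary 2.11, which gives the analogous equivalence with ``projective $Z(_{R}R)$-cover'' in place of ``locally projective $Z(_{R}R)$-cover,'' together with Lemma 3.4, which promotes any locally projective $I$-cover of a finitely generated module to a genuine projective $I$-cover. Since every projective module is locally projective, and every target module in (2)--(4) is finitely generated, these two results let us pass freely between the two cover notions.

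First I would establish $(1)\Rightarrow (3)\Rightarrow (4)\Rightarrow (2)$. For $(1)\Rightarrow (3)$, Corollary 2.11 yields that every finitely presented $R$-module has a projective $Z(_{R}R)$-cover; this is in particular a locally projective $Z(_{R}R)$-cover, giving (3). The implications $(3)\Rightarrow (4)$ and $(4)\Rightarrow (2)$ are immediate from the fact that $R/K$ with $K$ finitely generated is finitely presented, and cyclically presented modules are of the special form $R/Ra$.

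For the closing implication $(2)\Rightarrow (1)$, any cyclically presented left $R$-module is finitely generated, so Lemma 3.4 upgrades its locally projective $Z(_{R}R)$-cover to a projective $Z(_{R}R)$-cover. Corollary 2.11 in the direction $(2)\Rightarrow (1)$ then delivers that $R$ is $Z(_{R}R)$-semiregular, closing the cycle.

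I do not foresee any substantive obstacle: the heavy lifting (the equivalence for projective $Z(_{R}R)$-covers) has already been carried out in Corollary 2.11, and the bridge from ``locally projective'' back to ``projective'' in the finitely generated setting is precisely Lemma 3.4. The argument is essentially a bookkeeping chain through these two results, of the same flavor as the proofs of Corollaries 3.6 and 3.7.
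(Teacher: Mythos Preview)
Your proposal is correct and follows essentially the same approach the paper intends: Corollary~3.8 is obtained from the already-established projective-cover version (Corollary~2.11) together with Lemma~3.4, exactly as in the proofs of Corollaries~3.6 and~3.7 (equivalently, by specializing Theorem~3.5 and using that $Z(_{R}R)$-semiregularity forces $Z(_{R}R)=J(R)$, so $R$ is left PSD for $Z(_{R}R)$). The chain of implications you give and the use of Lemma~3.4 on the finitely generated targets in (2)--(4) are precisely what is needed.
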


Since if $I\leq Soc(_{R}R)$, then $R$ is a left PSD ring for $I$,
and hence we have

\begin{cor} The following statements are equivalent for a ring $R$.
\begin{enumerate}
\item $R$ is $Soc(_{R}R)$-semiregular.
\item Every cyclically presented left $R$-module has a locally projective
$Soc(_{R}R)$-cover.
\item Every finitely presented $R$-module has a locally projective
$Soc(_{R}R)$-cover.
\item For every finitely generated left ideal $K$ of $R$, $R/K$ has
a locally projective $Soc(_{R}R)$-cover.

\end{enumerate}
\end{cor}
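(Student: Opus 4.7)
The plan is to derive this corollary as a direct specialization of Theorem~3.5 with $I = \operatorname{Soc}({}_RR)$. Under that correspondence, conditions (3), (4), (2), (1) of the corollary match, respectively, conditions (1), (2), (3), (4) of Theorem~3.5. The unconditional chain $(1) \Rightarrow (2) \Rightarrow (3) \Rightarrow (4)$ from Theorem~3.5 thus already yields $(3) \Rightarrow (4) \Rightarrow (2) \Rightarrow (1)$ here, and only the implication $(1) \Rightarrow (3)$ requires further work. This remaining implication is the conditional direction $(4) \Rightarrow (1)$ of Theorem~3.5, whose hypothesis is that $R$ is a left PSD ring for $I$.

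So the one substantive step is to check that $R$ is a left PSD ring for $\operatorname{Soc}({}_RR)$. Let $F$ be a finitely generated free left $R$-module and let $N$ be a submodule of $\operatorname{Soc}({}_RR)\cdot F$. Since $\operatorname{Soc}({}_RR)\cdot F \leq \operatorname{Soc}(F)$, the module $N$ is semisimple. Given any $X \leq F$ with $N + X = F$, the semisimplicity of $N$ provides a decomposition $N = (N \cap X) \oplus N'$. Substituting gives $F = N' + X$ with $N' \cap X = 0$, so $F = N' \oplus X$. Then $N'$ is a direct summand of the projective module $F$, hence projective, and $N' \leq N$. Taking $S := N'$ verifies the PSD condition on $N$ in $F$, so every finitely generated free $R$-module is PSD for $\operatorname{Soc}({}_RR)$.

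With this PSD property established, the corollary is immediate from Theorem~3.5. The only real content is the PSD verification, which reduces to the classical fact that every submodule of a semisimple module splits off, together with the fact that a direct summand of a projective module is projective; I do not expect any subtler obstacle. The same argument in fact shows that $R$ is a left PSD ring for any ideal $I \leq \operatorname{Soc}({}_RR)$, which is exactly the general fact recorded in the remark preceding the corollary.
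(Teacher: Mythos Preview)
Your proposal is correct and follows essentially the same route as the paper: the paper simply invokes the remark preceding the corollary, namely that $R$ is a left PSD ring for any ideal $I\leq \operatorname{Soc}({}_RR)$, and then applies Theorem~3.5. You additionally spell out the verification of that PSD fact via the semisimplicity of $\operatorname{Soc}({}_RR)\cdot F$, which the paper leaves unproved; this is a welcome bit of extra detail but not a different approach.
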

 Next we shall consider the characterizations of $I$-semiperfect
 rings.
\begin{thm} Let $I$ be an ideal of $R$. Consider the following
conditions:
\begin{enumerate}
\item Every finitely generated $R$-module has a locally projective
$I$-cover.
\item Every factor module of $_{R}R$ has
a locally projective $I$-cover.
\item For every countably generated left ideal $K$ of $R$, $R/K$ has
a locally projective $I$-cover.
\item $R$ is $I$-semiperfect.
\item Every simple $R$-module has a locally projective
$I$-cover.
\item Every simple factor module of $_{R}R$ has a locally projective
$I$-cover.
\end{enumerate}
Then $(1)\Rightarrow (2)\Rightarrow (3)\Rightarrow (4)$ and
$(1)\Rightarrow (5)\Rightarrow (6)$, $(4)\Rightarrow (1)$ if $R$ is
a left PSD ring for $I$; and $(6)\Rightarrow (4)$ if $I$ is PSD in
$_{R}R$.

\end{thm}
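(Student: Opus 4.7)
The plan is to mirror the proof of Theorem 2.13, converting between locally projective $I$-covers and projective $I$-covers via Lemma 3.4 whenever the module being covered is cyclic or finitely generated. This reduction is clean because every module appearing in statements (1)--(6) is either a factor of $_RR$ (hence cyclic) or finitely generated, so any locally projective $I$-cover of it is automatically a projective $I$-cover by Lemma 3.4.

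First I would dispatch the easy implications. Every factor module of $_RR$ is cyclic, hence finitely generated, so $(1)\Rightarrow(2)\Rightarrow(3)$ is immediate; likewise $(1)\Rightarrow(5)\Rightarrow(6)$ follows because simple modules are cyclic and every simple factor of $_RR$ is simple. For $(3)\Rightarrow(4)$ I would take a countably generated left ideal $K$; since $R/K$ is cyclic, Lemma 3.4 upgrades its locally projective $I$-cover to a projective $I$-cover, so condition (3) of Theorem 2.13 is satisfied and hence $R$ is $I$-semiperfect.

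For $(4)\Rightarrow(1)$ under the hypothesis that $R$ is a left PSD ring for $I$, Theorem 2.13 gives a projective $I$-cover for every finitely generated $R$-module, and every projective module is locally projective, so (1) holds. Similarly, for $(6)\Rightarrow(4)$ assuming $I$ is PSD in $_RR$: every simple factor of $_RR$ is cyclic, so Lemma 3.4 converts the given locally projective $I$-covers into projective $I$-covers, and the corresponding implication of Theorem 2.13 (which needs only that $I$ be PSD in $_RR$) then yields that $R$ is $I$-semiperfect.

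The only genuine work is the use of Lemma 3.4 at the cyclic-module level, and that step is already packaged for us. No new construction is required and I expect no real obstacle; the main thing to verify carefully is that the PSD hypotheses stated here match exactly those required to invoke the relevant directions of Theorem 2.13. Accordingly the write-up can simply read: \emph{Similar to Theorem 2.13, using Lemma 3.4 to replace locally projective $I$-covers of cyclic (respectively finitely generated) modules by projective $I$-covers.}
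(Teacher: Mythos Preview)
Your proposal is correct and is exactly the approach the paper takes: the paper's entire proof reads ``It follows by Theorem 2.13 and Lemma 3.4.'' Your more detailed unpacking of how Lemma 3.4 converts locally projective $I$-covers of cyclic (or finitely generated) modules into projective $I$-covers, and how the PSD hypotheses line up with those in Theorem 2.13, is precisely what that one-line proof is abbreviating.
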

\begin{proof} It follows by Theorem 2.13 and Lemma 3.4.
\end{proof}

When $I=\delta(_{R}R)$ or $J(R)$, we have the following.

\begin{cor} The following statements are equivalent for a ring $R$.
\begin{enumerate}
\item $R$ is semiperfect.

\item Every finitely generated $R$-module has a locally projective
cover.
\item Every factor module of $_{R}R$ has
a locally projective cover.
\item For every countably generated left ideal $K$ of $R$, $R/K$ has
a locally projective cover.

\item Every simple $R$-module has a locally projective
cover.
\item Every simple factor module of $_{R}R$ has a locally projective
cover.
\end{enumerate}

\end{cor}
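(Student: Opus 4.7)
The strategy is to specialize Theorem 3.10 to the ideal $I=J(R)$ and to rewrite the resulting six conditions in their classical (non $I$-cover) form. Two translation facts do essentially all the work:

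\textbf{Step 1 (translating covers).} Invoke Proposition 3.3 to identify, for each module $M$, the existence of a locally projective $J(R)$-cover with the existence of a locally projective cover. Applied to finitely generated modules, factor modules of ${}_RR$, cyclically and countably generated quotients, and (simple) factor modules of ${}_RR$, this shows that conditions (2)--(6) of the corollary are literal restatements of conditions (1), (2), (3), (5), (6) of Theorem 3.10 for $I=J(R)$. Moreover, by the remark in the preliminaries, $R$ is semiperfect if and only if $R$ is $J(R)$-semiperfect, so condition (1) of the corollary is condition (4) of Theorem 3.10 for $I=J(R)$.

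\textbf{Step 2 (verifying the side hypotheses).} Theorem 3.10 delivers the cyclic chain of implications unconditionally, but needs $R$ to be a left PSD ring for $I$ in order to close $(4)\Rightarrow(1)$, and needs $I$ to be PSD in ${}_RR$ in order to close $(6)\Rightarrow(4)$. The first is already recorded in the paper (stated after Theorem 2.10 and again before Corollary 3.6): $R$ is a left PSD ring for $J(R)$. The second, that $J(R)$ is PSD in ${}_RR$, is immediate: if $J(R)+X=R$ then, because $J(R)\ll {}_RR$, we get $X=R$, and then $Y=0$ is a projective direct summand of $R$ contained in $J(R)$ with $Y\oplus X=R$.

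\textbf{Step 3 (assembly).} With both side hypotheses in force for $I=J(R)$, Theorem 3.10 provides every implication needed among its six conditions, and the translations of Step 1 transport these implications to conditions (1)--(6) of the present corollary. This gives the claimed equivalence. The only point requiring any thought is the PSD status of $J(R)$ in ${}_RR$, but this is a one-line consequence of $J(R)\ll {}_RR$; the rest is bookkeeping.
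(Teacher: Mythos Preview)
Your proposal is correct and follows essentially the same approach as the paper: the corollary is obtained by specializing Theorem~3.10 to $I=J(R)$, using Proposition~3.3 to translate locally projective $J(R)$-covers into ordinary locally projective covers, and noting (as the paper records) that $R$ is a left PSD ring for $J(R)$. Your explicit verification that $J(R)$ is PSD in ${}_RR$ via $J(R)\ll{}_RR$ is exactly the intended one-line check the paper leaves implicit.
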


\begin{cor} The following statements are equivalent for a ring $R$.
\begin{enumerate}
\item $R$ is $\delta$-semiperfect.

\item Every finitely generated $R$-module has a locally projective
$\delta$-cover.
\item Every factor module of $_{R}R$ has
a locally projective $\delta$-cover.
\item For every countably generated left ideal $K$ of $R$, $R/K$ has
a locally projective $\delta$-cover.

\item Every simple $R$-module has a locally projective
$\delta$-cover.
\item Every simple factor module of $_{R}R$ has a locally projective
$\delta$-cover.
\end{enumerate}

\end{cor}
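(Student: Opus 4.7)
The plan is to specialize Theorem 3.10 to the ideal $I = \delta(_{R}R)$ and then translate locally projective $\delta(_{R}R)$-covers into locally projective $\delta$-covers.

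First I would verify that $R$ satisfies the two PSD hypotheses appearing in Theorem 3.10 for this choice of $I$. The remark immediately after Theorem 3.5 already records that $R$ is a left PSD ring for $I = \delta(_{R}R)$, which gives the implications $(4)\Rightarrow(1)$ and hence the full cycle $(1)\Leftrightarrow(2)\Leftrightarrow(3)\Leftrightarrow(4)$. For $(6)\Rightarrow(4)$ I need that $\delta(_{R}R)$ is PSD in $_{R}R$; this is the module-level statement underlying the ring-level PSD property (one applies Corollary 2.3 / Lemma 2.2 to write $_{R}R$ as a summand of $R^n$), so it follows from the same fact.

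Next I would argue that for any module $M$, a locally projective $\delta(_{R}R)$-cover is the same object as a locally projective $\delta$-cover. This is the analogue, for locally projective covers, of the observation already made in the paper (right after Definition 2.6, via [1, Proposition 3.6]) for ordinary projective covers. Concretely, if $f\colon P\to M$ is locally projective with $\operatorname{Ker} f$ PSD in $P$ and $\operatorname{Ker} f\subseteq \delta(_{R}R)P = \delta(P)$, then for any $L\le P$ with $\operatorname{Ker} f+L = P$ and $P/L$ singular, the PSD property produces a projective summand $K\le \operatorname{Ker} f\subseteq \delta(P)$ with $P = K\oplus L$; since $K$ is a projective summand of $P$ contained in $\delta(P)$, one gets $K = 0$ and hence $L = P$, which is exactly $\operatorname{Ker} f\ll_\delta P$. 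The converse is standard because $\ll_\delta$ submodules lie in $\delta(P) = \delta(_{R}R)P$ and automatically satisfy the PSD condition.

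With these two translations in hand the corollary is merely a rewrite of Theorem 3.10 with ``locally projective $I$-cover'' replaced by ``locally projective $\delta$-cover'' throughout. The main obstacle I anticipate is the second step, namely establishing cleanly the equivalence between the $\delta(_{R}R)$-cover formulation and the $\ll_\delta$ formulation for locally projective modules; the delicate point is the implication from PSD to $\ll_\delta$, where one must exploit that a projective summand contained in $\delta(P)$ is necessarily zero, which in turn uses that $\delta(K) = K$ forces $K = 0$ for projective $K$ in a parallel fashion to the argument in Proposition 3.3.
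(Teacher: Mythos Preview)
Your overall plan---specialize Theorem~3.10 to $I=\delta(_{R}R)$ and then identify locally projective $\delta(_{R}R)$-covers with locally projective $\delta$-covers---is the paper's strategy, and the PSD verifications are fine. The gap is in your translation step. You assert that a projective summand $K$ of $P$ contained in $\delta(P)$ must vanish, arguing ``in parallel'' with Proposition~3.3 that $\delta(K)=K$ forces $K=0$ for projective $K$. This is false: any nonzero projective semisimple module $K$ (for instance $K=R$ over a semisimple ring) satisfies $\delta(K)=K$. The correct conclusion from $\delta(K)=K$ with $K$ projective is only that $K$ is projective semisimple (compare the $\delta$-case in the proof of Theorem~5.2), which alone does not yield $K=0$. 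You also use $\delta(_{R}R)P=\delta(P)$ for merely locally projective $P$; the paper only has this for projective modules.

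The paper sidesteps both problems by invoking Lemma~3.4 rather than arguing directly at the locally projective level. Every module $M$ appearing in (2)--(6) is finitely generated, so any locally projective $\delta(_{R}R)$-cover of $M$ is in fact a \emph{projective} $\delta(_{R}R)$-cover by Lemma~3.4, and then the identification with projective $\delta$-covers is exactly the remark following Definition~2.6 (via [1, Proposition~3.6]). For the reverse direction, the proof of Lemma~3.4 uses only that $\mathrm{Ker}\,f$ is PSD in $P$; since $\mathrm{Ker}\,f\ll_{\delta}P$ implies PSD (Zhou's characterization produces a projective semisimple complementary summand), the same argument shows that a locally projective $\delta$-cover of a finitely generated module is already projective. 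Route your translation through Lemma~3.4 in this way and the corollary follows.
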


\begin{proof} By Lemma 3.4 and Theorem 3.10.
\end{proof}

\begin{cor} The following statements are equivalent for a ring $R$.
\begin{enumerate}
\item $R$ is $Z(_{R}R)$-semiperfect.
\item Every finitely generated $R$-module has a locally projective
$Z(_{R}R)$-cover.
\item Every factor module of $_{R}R$ has
a locally projective $Z(_{R}R)$-cover.
\item For every countably generated left ideal $K$ of $R$, $R/K$ has
a locally projective $Z(_{R}R)$-cover.
\item Every simple $R$-module has a locally projective
$Z(_{R}R)$-cover.
\item Every simple factor module of $_{R}R$ has a locally projective
$Z(_{R}R)$-cover.

\end{enumerate}

\end{cor}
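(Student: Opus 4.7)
The plan is to reduce Corollary 3.13 to Corollary 2.14 (the analogous statement in terms of projective $Z(_{R}R)$-covers) by using Lemma 3.4. The essential observation is that in each of the six conditions, the target module in question is finitely generated: the modules in (5) and (6) are simple, those in (2), (3), (6) are cyclic, and in (1) they are finitely presented. Lemma 3.4 says that a locally projective $I$-cover of a finitely generated module is automatically a projective $I$-cover. Conversely, every projective module is locally projective, so every projective $I$-cover is a locally projective $I$-cover. Thus, for each module class $\mathcal{C}$ appearing in (1)--(6), ``every $M\in\mathcal{C}$ has a locally projective $Z(_{R}R)$-cover'' is logically equivalent to ``every $M\in\mathcal{C}$ has a projective $Z(_{R}R)$-cover.''

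Having made this translation, each of the six locally-projective conditions here matches exactly its projective counterpart in Corollary 2.14. Corollary 2.14 already asserts that those six projective-cover conditions are equivalent to $R$ being $Z(_{R}R)$-semiperfect. Threading the equivalences through Lemma 3.4 then yields the desired chain for the locally-projective versions stated in Corollary 3.13. Alternatively, one can simply invoke Theorem 3.10 with $I = Z(_{R}R)$: as noted before Corollary 2.11, a $Z(_{R}R)$-semiregular (in particular $Z(_{R}R)$-semiperfect) ring satisfies $Z(_{R}R)=J(R)\subseteq \delta(_{R}R)$, which gives the PSD hypotheses required in Theorem 3.10 for $(4)\Rightarrow(1)$ and $(6)\Rightarrow(4)$.

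Most of the work is therefore already done. The only step requiring any care is verifying that all the implications one wants really do factor through the finitely-generated hypothesis of Lemma 3.4; this is immediate by inspection of each of (1)--(6). I expect no real obstacle: the proof is essentially a one-liner, ``By Lemma 3.4, Corollary 2.14, and Theorem 3.10,'' matching the pattern used for Corollaries 3.6, 3.7, 3.11, and 3.12.
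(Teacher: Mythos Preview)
Your approach is correct and matches the paper's intended argument: Corollary~3.13 is stated without proof, following the pattern of Corollaries~3.11--3.12 (which invoke Lemma~3.4 and Theorem~3.10), together with the observation before Corollary~2.11 that $Z(_{R}R)$-semiregularity forces $Z(_{R}R)=J(R)$, which supplies the PSD hypotheses. One small slip in your write-up: condition~(1) here is the ring-theoretic statement ``$R$ is $Z(_{R}R)$-semiperfect'' (not a cover condition), and the modules in~(2) are finitely generated rather than cyclic or finitely presented; this does not affect the argument, since Lemma~3.4 only requires finite generation.
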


\begin{cor} The following statements are equivalent for a ring $R$.
\begin{enumerate}
\item $R$ is $Soc(_{R}R)$-semiperfect.
\item Every finitely generated $R$-module has a locally projective
$Soc(_{R}R)$-cover.
\item Every factor module of $_{R}R$ has
a locally projective $Soc(_{R}R)$-cover.
\item For every countably generated left ideal $K$ of $R$, $R/K$ has
a locally projective $Soc(_{R}R)$-cover.
\item Every simple $R$-module has a locally projective
$Soc(_{R}R)$-cover.
\item Every simple factor module of $_{R}R$ has a locally projective
$Soc(_{R}R)$-cover.

\end{enumerate}

\end{cor}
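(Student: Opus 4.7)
The plan is to invoke Theorem 3.10 with $I = Soc(_{R}R)$. The chains $(1)\Rightarrow(2)\Rightarrow(3)\Rightarrow(4)$ and $(1)\Rightarrow(5)\Rightarrow(6)$ are given unconditionally by that theorem, so the only real content of the corollary is verifying the two hypotheses that activate the reverse directions: that $R$ is a left PSD ring for $Soc(_{R}R)$, which upgrades $(4)\Rightarrow(1)$, and that $Soc(_{R}R)$ is PSD in $_{R}R$, which upgrades $(6)\Rightarrow(4)$.

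For the first hypothesis, I would fix a finitely generated free $R$-module $F$ and a submodule $N \leq Soc(_{R}R)\cdot F$. One first checks that $Soc(_{R}R)\cdot F \leq Soc(F)$, since for $s \in Soc(_{R}R)$ and $f \in F$ the cyclic module $R(sf)$ is a quotient of $Rs$ and hence semisimple. Thus $N$ is semisimple. Given any decomposition $N + X = F$ with $X \leq F$, semisimplicity of $N$ lets me split $N = (N\cap X)\oplus N'$ for some $N' \leq N$. A direct check gives $N'\cap X = 0$ and $N' + X = N + X = F$, so $F = N'\oplus X$. Because $F$ is projective, the direct summand $N'$ is also projective, and $N' \leq N$, so $N$ is PSD in $F$ in the sense of Definition 2.1. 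This establishes that every finitely generated free $R$-module is PSD for $Soc(_{R}R)$, which is exactly the definition of $R$ being a left PSD ring for this ideal.

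The second hypothesis is then immediate: $_{R}R$ is itself a finitely generated free $R$-module, and $Soc(_{R}R)$ sits inside $Soc(_{R}R)\cdot R = Soc(_{R}R)$, so applying the previous paragraph with $F = {}_{R}R$ and $N = Soc(_{R}R)$ shows $Soc(_{R}R)$ is PSD in $_{R}R$.

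With both PSD conditions in place, Theorem 3.10 delivers the equivalence of (1) through (6). The only step that uses anything beyond unwinding definitions is the semisimple splitting $N = (N\cap X)\oplus N'$, which is standard, so I do not foresee any real obstacle; the work is essentially bookkeeping once the $Soc$-case of the PSD property has been observed.
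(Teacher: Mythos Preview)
Your approach is exactly the paper's: specialize Theorem~3.10 to $I=Soc(_{R}R)$ after observing that this ideal makes $R$ a left PSD ring (the paper asserts this before Corollary~2.12 without proof, so your semisimple-splitting argument actually supplies a detail the paper omits). One small caution: the labels you quote---``$(1)\Rightarrow(2)\Rightarrow(3)\Rightarrow(4)$ unconditionally, $(4)\Rightarrow(1)$ and $(6)\Rightarrow(4)$ under the PSD hypotheses''---are those of Theorem~3.10, not of the corollary itself, where the r\^oles of (1) and (4) are swapped; the logic is fine, but you should say explicitly that you are using the theorem's numbering to avoid confusing a reader.
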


\section{\bf $I$-semiregular($I$-semiperfect) rings characterized by projectivity
classes}

Wang [14] gave characterizations of semiregular rings and
semiperfect rings by introducing the concept of projectivity classes
of modules. Motivated by this, we shall characterize $I$-semiregular
rings and $I$-semiperfect rings in term of projectivity classes of
modules in this section.

\begin{defn} ( [14] ) A class $\mathscr{P}$ of $R$-modules is called a projectivity class
 if it contains all self-projective modules and for every module
$M$ and every projective module $P$ with an epimorphism $f:
P\rightarrow M$, $P\oplus M\in \mathscr{P}$ implies that $M$ is
projective.
\end{defn}

\begin{exa}
1. The class of all quasi-projective modules is a projectivity
class.

2. The class of all weakly quasi-projective modules in the sense of
Rangaswamy  and Vanaja is a projectivity class.

3. The class of all pseudo-projective modules is a projectivity
class.

4. The class of all direct-projective modules is a projectivity
class.

5. For any perfect ring $R$, the class of all discrete $R$-modules
is a projectivity class.

\end{exa}

For a projectivity class $\mathscr{P}$, we introduce the following
concept.

\begin{defn} Let $I$ be an ideal of a ring $R$ and $M$ be a module.
 We call an epimorphism $f: P\rightarrow M$ a
$\mathscr{P}$-projective $I$-cover of $M$ if $P\in \mathscr{P}$ and
$Kerf\leq IP$, $Kerf$ is PSD  in $P$.
\end{defn}

\begin{lem} Let $I$ be an ideal of a ring $R$, $\mathscr{P}$ a projectivity class and which is closed
under taking direct summands. Suppose that $P$ is a projective
module and there is an epimorphism $f: P\rightarrow M$. If $P\oplus
M$ has a $\mathscr{P}$-projective $I$-cover, then $M$ has a
projective $I$-cover.
\end{lem}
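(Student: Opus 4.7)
The plan is to convert the given $\mathscr{P}$-projective $I$-cover of $P\oplus M$ into a projective $I$-cover of $M$ by carving out a suitable direct summand. Let $g:Q\rightarrow P\oplus M$ be the hypothesised $\mathscr{P}$-projective $I$-cover, and write $\pi_{P},\pi_{M}$ for the canonical projections from $P\oplus M$. Because $P$ is projective and $\pi_{P}\circ g$ is an epimorphism, this composite splits, yielding a decomposition $Q=Q_{1}\oplus Q_{2}$ with $Q_{1}\cong P$ and $Q_{2}=Ker(\pi_{P}\circ g)$. A routine check forces $Kerg\subseteq Q_{2}$, so the restriction $h:=\pi_{M}\circ g\mid_{Q_{2}}:Q_{2}\rightarrow M$ is an epimorphism with $Kerh=Kerg$. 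Closure of $\mathscr{P}$ under direct summands gives $Q_{2}\in\mathscr{P}$; the direct-sum identity $IQ\cap Q_{2}=IQ_{2}$ gives $Kerh\subseteq IQ_{2}$; and Lemma 2.2, applied to the summand $Q_{2}$ of $Q$, transfers the PSD property of $Kerg$ in $Q$ to $Kerh$ in $Q_{2}$. Hence $h$ is already a $\mathscr{P}$-projective $I$-cover of $M$; it only remains to show that $Q_{2}$ is projective.

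To produce that projectivity, I would exploit the given epimorphism $f:P\rightarrow M$. Lift $f$ through $h$ to $\tilde f:P\rightarrow Q_{2}$ with $h\tilde f=f$, which exists because $P$ is projective and $h$ is epic. Since $f$ is onto, a one-line chase shows $Q_{2}=\tilde f(P)+Kerh$. Now apply the PSD property of $Kerh$ in $Q_{2}$ with $\tilde f(P)$ playing the role of the complementary submodule: it yields a projective direct summand $S$ of $Q_{2}$ with $S\subseteq Kerh$ and $Q_{2}=S\oplus\tilde f(P)$. Reading this back inside $Q$ gives $Q=Q_{1}\oplus S\oplus\tilde f(P)$, so $Q_{1}\oplus\tilde f(P)\cong P\oplus\tilde f(P)$ is a direct summand of $Q\in\mathscr{P}$ and hence itself lies in $\mathscr{P}$.

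The projectivity-class axiom now closes the argument: $P$ is projective, $\tilde f:P\rightarrow\tilde f(P)$ is epic, and $P\oplus\tilde f(P)\in\mathscr{P}$, so $\tilde f(P)$ is projective. Together with the already-projective $S$, the decomposition $Q_{2}=S\oplus\tilde f(P)$ exhibits $Q_{2}$ as projective, and $h:Q_{2}\rightarrow M$ is the desired projective $I$-cover of $M$.

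The main subtlety I expect is the book-keeping required to check that all three ingredients of the projective-$I$-cover data descend cleanly to the summand $Q_{2}$; this is handled by Lemma 2.2 together with $IQ=IQ_{1}\oplus IQ_{2}$. The genuine trick, however, is the second use of the PSD condition: rather than attempting to place $M$ itself into $\mathscr{P}$ and applying the projectivity-class axiom to $(P,M)$ directly, one uses PSD a second time to split off $\tilde f(P)$ as a summand of $Q_{2}$, whereupon $P\oplus\tilde f(P)\in\mathscr{P}$ comes for free from the direct-summand structure of $Q$, and the axiom is applied to the smaller pair $(P,\tilde f(P))$ to deliver projectivity exactly where it is needed.
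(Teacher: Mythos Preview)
Your proof is correct and follows essentially the same route as the paper's: split $Q$ over the projective $P$ to isolate $Q_{2}=g^{-1}(M)$, transfer the $I$-cover data to $Q_{2}$ via Lemma~2.2, lift $f$ through $h$ to obtain $\tilde f(P)+Kerh=Q_{2}$, invoke PSD a second time to write $Q_{2}=S\oplus\tilde f(P)$, and then apply the projectivity-class axiom to conclude $\tilde f(P)$ is projective. The only cosmetic difference is notation ($Q_{2}$, $\tilde f$, $S$ in place of the paper's $g^{-1}(M)$, $\alpha$, $L$); one minor remark is that the direct summand of $Q$ you actually obtain is $Q_{1}\oplus\tilde f(P)$, so the projectivity-class axiom is most cleanly applied with the projective module $Q_{1}\cong P$ rather than $P$ itself---but this is harmless.
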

\begin{proof} Let $g: Q\rightarrow P\oplus M$ be a $\mathscr{P}$-projective
$I$-cover of $P\oplus M$. We have an exact sequence $0\rightarrow
g^{-1}(M)\rightarrow Q\stackrel{\phi g}\rightarrow P\rightarrow 0$,
where $\phi: P\oplus M\rightarrow P$ is the projection map. Since
$P$ is projective, $Q\cong P\oplus g^{-1}(M)$ and Ker$(\phi
g)=g^{-1}(M)$ is a direct summand of $Q$. Note that Kerg is PSD in
$Q$, and so Kerg is PSD in $g^{-1}(M)$ by Lemma 2.2. It is easy to
see that Ker$g\subseteq Ig^{-1}(M)$. Clearly, we have an exact
sequence $0\rightarrow $Ker$g\rightarrow
g^{-1}(M)\stackrel{g}\rightarrow M\rightarrow 0$. So it suffices to
show that $g^{-1}(M)$ is projective. Since $P$ is projective with an
epimorphism $f: P\rightarrow M$, and $g: g^{-1}(M)\rightarrow M$ is
an epimorphism, there is a homomorphism $\alpha: P\rightarrow
g^{-1}(M)$ such that $g\alpha=f$, and hence
Im$\alpha+$Ker$g=g^{-1}(M)$. Since Ker$g$ is PSD in $g^{-1}(M)$,
there is a projective submodule $L$ of Ker$g$ such that
Im$\alpha\oplus L=g^{-1}(M)$. Thus $Q\cong P\oplus$Im$\alpha\oplus
L$ belongs to $\mathscr{P}$. Since $\mathscr{P}$ is closed under
taking direct summands, $P\oplus$Im$\alpha\in \mathscr{P}$, and
there is an epimorphism $P\rightarrow$Im$\alpha$, Im$\alpha$ is
projective. Thus $g^{-1}(M)=$Im$\alpha\oplus L$ is projective, as
desired.
\end{proof}
\begin{cor} Let $I$ be an ideal of a ring $R$, $\mathscr{P}$ a projectivity class and which is closed
under taking direct summands.  Consider the following statements:
\begin{enumerate}
\item Every finitely presented $R$-module has a $\mathscr{P}$-projective
$I$-cover.
\item For every finitely generated left ideal $K$ of $R$, $R/K$ has
a $\mathscr{P}$-projective $I$-cover.
\item Every cyclically presented left $R$-module has a $\mathscr{P}$-projective
$I$-cover.
\item $R$ is $I$-semiregular.
\end{enumerate}
Then $(1)\Rightarrow (2)\Rightarrow (3)\Rightarrow (4)$,
$(4)\Rightarrow (1)$ if $R$ is a left PSD ring for $I$.

\end{cor}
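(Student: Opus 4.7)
The plan is to verify each implication separately. The easy ones are $(1)\Rightarrow(2)$ and $(2)\Rightarrow(3)$: for $(1)\Rightarrow(2)$, a finitely generated left ideal $K$ yields a finitely presented module $R/K$ via $R^n \to R \to R/K \to 0$, so (1) applies; and for $(2)\Rightarrow(3)$, every cyclically presented left $R$-module is of the form $R/Ra$, where the principal ideal $Ra$ is trivially finitely generated, so (2) applies.

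The main work is $(3)\Rightarrow(4)$. The strategy is to combine (3) with Lemma 4.4 to upgrade each $\mathscr{P}$-projective $I$-cover of a cyclically presented module to a genuine projective $I$-cover, after which Theorem 2.10's implication $(3)\Rightarrow(4)$ delivers $I$-semiregularity. Concretely, for each $a \in R$, let $g \colon Q \to R/Ra$ be the $\mathscr{P}$-projective $I$-cover guaranteed by (3). To invoke Lemma 4.4 with $P = R$ and $M = R/Ra$, one needs a $\mathscr{P}$-projective $I$-cover of $R \oplus R/Ra$; the natural candidate is $\mathrm{id}_R \oplus g \colon R \oplus Q \to R \oplus R/Ra$, whose kernel $0 \oplus \mathrm{Ker}\, g$ lies inside $I(R \oplus Q)$ and is PSD in $R \oplus Q$ by Proposition 2.5. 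Finally, $(4)\Rightarrow(1)$ under the PSD hypothesis is immediate: by Theorem 2.10, every finitely presented $R$-module has a projective $I$-cover, and projective modules are self-projective and therefore lie in $\mathscr{P}$ by definition, so that projective $I$-cover is \emph{ipso facto} a $\mathscr{P}$-projective $I$-cover.

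The main obstacle is verifying that $R \oplus Q \in \mathscr{P}$ in the upgrade step of $(3)\Rightarrow(4)$: the class $\mathscr{P}$ is postulated to be closed only under direct summands, and in general the direct sum of a projective module and a quasi-projective module need not be quasi-projective. One possible workaround is to skip the construction of a cover of $R \oplus R/Ra$ altogether and argue directly: lift the canonical projection $\pi \colon R \to R/Ra$ through $g$ to obtain $h \colon R \to Q$ with $gh = \pi$, so $h(R) + \mathrm{Ker}\, g = Q$, and then split off a projective summand via the PSD property of $\mathrm{Ker}\, g$ to get $Q = K \oplus h(R)$ with $K$ projective and $K \subseteq \mathrm{Ker}\, g$. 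Since $\mathscr{P}$ is closed under summands, $h(R) \in \mathscr{P}$; one then attempts to apply the defining property of a projectivity class (with $h \colon R \to h(R)$ as the epimorphism from a projective) to force $h(R)$ to be projective, which would then give the desired projective $I$-cover of $R/Ra$ and, via Lemma 2.9, the $I$-semiregular decomposition of $R$ at the element $a$.
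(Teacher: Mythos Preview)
Your diagnosis of the obstacle in the first route is correct: nothing forces $R\oplus Q$ to lie in $\mathscr{P}$. Unfortunately the workaround carries an analogous gap. From $Q=K\oplus h(R)$ with $K$ projective you correctly deduce $h(R)\in\mathscr{P}$ by closure under summands, but the defining property of a projectivity class (Definition~4.1) demands $R\oplus h(R)\in\mathscr{P}$, not merely $h(R)\in\mathscr{P}$, before you may conclude that $h(R)$ is projective. In your decomposition only $h(R)$ and the projective $K$ sit inside $Q$; the free module $R$ never appears as a direct summand of $Q$, so the condition $R\oplus h(R)\in\mathscr{P}$ is unavailable and the argument stalls one step short of projectivity. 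This is precisely why Lemma~4.4 is phrased for a cover of $P\oplus M$ rather than of $M$: the extra copy of $P$ inside the covering module is what eventually supplies the summand $P\oplus\text{Im}\,\alpha\in\mathscr{P}$ needed to fire the projectivity-class axiom.

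The paper sidesteps the whole difficulty by never attempting to manufacture a cover of $R\oplus R/Rr$ out of a cover of $R/Rr$. It applies the hypothesis \emph{directly} to the module $R\oplus R/Rr$, obtaining a $\mathscr{P}$-projective $I$-cover of $R\oplus R/Rr$ itself, and then Lemma~4.4 with $P=R$ and $M=R/Rr$ yields a genuine projective $I$-cover of $R/Rr$; Theorem~2.10 finishes. Note that $R\oplus R/Rr\cong R^{2}/R(0,r)$ is finitely presented, so this is certainly legitimate under hypothesis~(1). (The paper's printed proof literally invokes assumption~(3) at this step; read strictly that would require $R\oplus R/Rr$ to be cyclically presented, which it need not be, so the clean passage is really $(1)\Rightarrow(4)$ with $(2)$ and $(3)$ recorded as intermediate specialisations.)
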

\begin{proof} $``(1)\Rightarrow (2)\Rightarrow (3)"$ are clear.

$``(3)\Rightarrow (4)"$ $R\oplus R/Rr$ has a
$\mathscr{P}$-projective by assumption, and hence $R/Rr$ has a
projective $I$-cover by Lemma 4.4. Thus $R$ is $I$-semiregular by
Theorem 2.10. $``(4)\Rightarrow (1)"$ is clear by Theorem 2.10.
\end{proof}
\begin{cor} Let $R$ be a ring, $\mathscr{P}$ a projectivity class and which is closed
under taking direct summands. Then the following statements are
equivalent.
\begin{enumerate}
\item $R$ is $Z(_{R}R)$-semiregular.
\item Every cyclically presented left $R$-module has a $\mathscr{P}$-projective
$Z(_{R}R)$-cover.
\item Every finitely presented $R$-module has a $\mathscr{P}$-projective
$Z(_{R}R)$-cover.
\item For every finitely generated left ideal $K$ of $R$, $R/K$ has
a $\mathscr{P}$-projective $Z(_{R}R)$-cover..
\end{enumerate}
\end{cor}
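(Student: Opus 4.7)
The plan is to reduce everything to Corollary 4.5, which already gives the equivalence for a general ideal $I$ modulo the left PSD ring hypothesis. After the straightforward relabeling---namely Corollary 4.6(3) = Corollary 4.5(1), Corollary 4.6(4) = Corollary 4.5(2), Corollary 4.6(2) = Corollary 4.5(3), and Corollary 4.6(1) = Corollary 4.5(4)---the forward implications $(3)\Rightarrow(4)\Rightarrow(2)\Rightarrow(1)$ are exactly the PSD-free implications of Corollary 4.5, and can be invoked directly with no additional work.

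The only step requiring real care is $(1)\Rightarrow(3)$, which in Corollary 4.5 needs $R$ to be a left PSD ring for the ideal under consideration. I would verify this hypothesis by hand for $I=Z(_{R}R)$. By the observation recorded just before Corollary 2.11, a $Z(_{R}R)$-semiregular ring satisfies $Z(_{R}R)=J(R)$ (and in fact $Z(_{R}R)\subseteq\delta(_{R}R)$). Consequently, for any finitely generated free module $F$, we have $Z(_{R}R)F=J(R)F\subseteq Rad(F)$, so every submodule $N$ of $Z(_{R}R)F$ is small in $F$. Such an $N$ is automatically PSD in the sense of Definition 2.1: whenever $N+X=F$ smallness forces $X=F$, so the projective summand $S=0\leq N$ witnesses the required decomposition $F=S\oplus X$. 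Hence $R$ is a left PSD ring for $Z(_{R}R)$, and the implication $(4)\Rightarrow(1)$ of Corollary 4.5 produces the desired $(1)\Rightarrow(3)$ in the present corollary.

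The main obstacle, if one calls it that, is simply the fact $Z(_{R}R)=J(R)$ for $Z(_{R}R)$-semiregular rings; but this is taken as given in the paper (see the paragraph preceding Corollary 2.11), so no new work is needed beyond bookkeeping and citation of Corollary 4.5.
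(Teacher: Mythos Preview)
Your proposal is correct and follows exactly the route the paper intends: the corollary is obtained from Corollary~4.5 by specializing $I=Z(_{R}R)$, with the only nontrivial implication $(1)\Rightarrow(3)$ handled via the observation (recorded before Corollary~2.11) that $Z(_{R}R)=J(R)$ for a $Z(_{R}R)$-semiregular ring, so that $R$ is a left PSD ring for $Z(_{R}R)$. Your explicit verification that submodules of $J(R)F\subseteq\mathrm{Rad}(F)$ are small (hence PSD with witness $S=0$) just unpacks what the paper takes for granted in the sentence after Theorem~2.10.
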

\begin{cor}
Let $R$ be a ring, $\mathscr{P}$ a projectivity class and which is
closed under taking direct summands. Then the following statements
are equivalent.
\begin{enumerate}
\item $R$ is $Soc(_{R}R)$-semiregular.
\item Every cyclically presented left $R$-module has a $\mathscr{P}$-projective
$Soc(_{R}R)$-cover.
\item Every finitely presented $R$-module has a $\mathscr{P}$-projective
$Soc(_{R}R)$-cover.
\item For every finitely generated left ideal $K$ of $R$, $R/K$ has
a $\mathscr{P}$-projective $Soc(_{R}R)$-cover.
\end{enumerate}
\end{cor}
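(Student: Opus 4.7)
The plan is to derive this corollary as an immediate specialization of Corollary 4.5 to the ideal $I = \mathrm{Soc}(_{R}R)$. The only hypothesis in Corollary 4.5 that is not visibly built into the statement of Corollary 4.7 is the requirement that $R$ be a left PSD ring for $I$, which is what powers the implication $(4)\Rightarrow(1)$ there.

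The key observation, already used earlier in the paper (immediately before Corollary 2.12), is that whenever $I\leq \mathrm{Soc}(_{R}R)$ the ring $R$ is automatically a left PSD ring for $I$. Taking $I=\mathrm{Soc}(_{R}R)$ itself, we obtain for free the missing hypothesis of Corollary 4.5. The remaining hypothesis, namely that $\mathscr{P}$ be a projectivity class closed under direct summands, is carried over verbatim from the statement.

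With this in place, the proof reduces to a bookkeeping match of the numbering: condition (1) of Corollary 4.7 corresponds to condition (4) of Corollary 4.5, while conditions (2), (3), (4) of Corollary 4.7 correspond to conditions (3), (1), (2) of Corollary 4.5, respectively. Since under the left PSD hypothesis Corollary 4.5 asserts $(1)\Rightarrow(2)\Rightarrow(3)\Rightarrow(4)\Rightarrow(1)$, the four conditions above are mutually equivalent.

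There is no real obstacle; all the substantive work was carried out in Lemma 4.4, Corollary 4.5, and the PSD-for-$\mathrm{Soc}$ remark of Section 2. I would therefore write the proof as a single line citing those results, in parallel with the treatment of Corollary 4.6 for $Z(_{R}R)$-semiregular rings.
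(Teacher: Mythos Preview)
Your proposal is correct and matches the paper's intended argument: the corollary is left without proof in the paper precisely because it is the specialization of Corollary~4.5 to $I=\mathrm{Soc}(_{R}R)$, using the observation (stated before Corollary~2.12) that $I\leq\mathrm{Soc}(_{R}R)$ forces $R$ to be a left PSD ring for $I$. Your bookkeeping of the renumbered conditions is also accurate.
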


\begin{cor} Let $\mathscr{P}$ be a projectivity class and which is closed
under taking direct summands. The following statements are
equivalent for a ring $R$:
\begin{enumerate}
\item $R$ is $\delta$-semiregular.
\item Every cyclically presented left $R$-module has a $\mathscr{P}$-projective
$\delta$-cover.
\item Every finitely presented $R$-module has a $\mathscr{P}$-projective
$\delta$-cover.
\item For every finitely generated left ideal $K$ of $R$, $R/K$ has
a $\mathscr{P}$-projective $\delta$-cover.
\end{enumerate}
\end{cor}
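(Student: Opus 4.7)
The plan is to apply Corollary~4.5 directly with $I=\delta(_{R}R)$. First I would observe that by the convention set up in the Introduction, ``$R$ is $\delta$-semiregular'' means precisely ``$R$ is $\delta(_{R}R)$-semiregular,'' so condition (1) of the present statement is exactly condition (4) of Corollary~4.5 specialized to this ideal. Likewise the three conditions here about cyclically presented, finitely presented, and $R/K$ (with $K$ finitely generated) modules having $\mathscr{P}$-projective $\delta$-covers are exactly the conditions appearing in Corollary~4.5 about $\mathscr{P}$-projective $\delta(_{R}R)$-covers. The hypothesis on $\mathscr{P}$ (closed under direct summands) is identical in both statements, so Lemma~4.4, which powers the nontrivial direction of Corollary~4.5, carries over unchanged.

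Next I would confirm that the additional assumption needed for the implication $(4)\Rightarrow(1)$ of Corollary~4.5 is available, namely that $R$ is a left PSD ring for $\delta(_{R}R)$. This is precisely the remark recorded immediately after Theorem~2.10 (and reiterated following Theorem~3.5): for $I=\delta(_{R}R)$ or $J(R)$, the ring $R$ is automatically a left PSD ring for $I$. With this in hand, every implication of Corollary~4.5 becomes unconditional in our setting.

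Combining these two observations, Corollary~4.5 immediately yields the full cycle $(3)\Rightarrow(4)\Rightarrow(2)\Rightarrow(1)\Rightarrow(3)$ of the present statement, establishing equivalence of all four conditions. There is no substantive obstacle; the only care required is bookkeeping to match the numbering in the present corollary to that of Corollary~4.5. One could also phrase the argument as: Corollary~4.7 (or Corollary~3.7) already records this equivalence for ordinary $\mathscr{P}$-projective $\delta$-covers when one bypasses the $I$-language, but the cleanest and most self-contained route is the direct specialization described above.
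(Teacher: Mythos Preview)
Your proposal is correct and matches the paper's intended derivation: Corollary~4.8 is stated without proof, as an immediate specialization of Corollary~4.5 with $I=\delta(_{R}R)$, using the remark after Theorem~2.10 that $R$ is automatically a left PSD ring for $\delta(_{R}R)$ to make the conditional implication unconditional. Your bookkeeping of the renumbering is accurate; the only slip is the parenthetical reference to Corollary~4.7 (which is the $Soc(_{R}R)$ case, not $\delta$) and Corollary~3.7 (which concerns locally projective, not $\mathscr{P}$-projective, covers), but since you correctly identify the direct specialization of Corollary~4.5 as the actual argument, this does not affect the proof.
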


When $\mathscr{P}$ is the class of all direct-projective modules,
Corollary 4.8 gives [15, Proposition 4.4].

\begin{cor} Let $\mathscr{P}$ be a projectivity class and which is closed
under taking direct summands. The following statements are
equivalent for a ring $R$:
\begin{enumerate}
\item $R$ is semiregular.
\item Every cyclically presented left $R$-module has a $\mathscr{P}$-projective
cover.
\item Every finitely presented $R$-module has a $\mathscr{P}$-projective
cover.
\item For every finitely generated left ideal $K$ of $R$, $R/K$ has
a $\mathscr{P}$-projective cover.
\end{enumerate}
\end{cor}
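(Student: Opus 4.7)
The plan is to derive this corollary as the $I=J(R)$ specialization of Corollary~4.5, completely parallel to how Corollary~4.8 is the $I=\delta(_R R)$ specialization. The three ingredients needed are: (i) the identification ``$J(R)$-semiregular = semiregular,'' which the paper records explicitly in the introductory paragraph after Definition~2.6; (ii) the fact that $R$ is a left PSD ring for $J(R)$, which is stated in the sentence immediately following Theorem~2.10; and (iii) the observation that a $\mathscr{P}$-projective $J(R)$-cover is exactly what the statement abbreviates as a $\mathscr{P}$-projective cover, using the convention already established (compare the remark after Definition~2.6, where a projective $J(R)$-cover is identified with an ordinary projective cover via the PSD condition).

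First I would invoke Corollary~4.5 with $I=J(R)$. The implications $(3)\Rightarrow (2)\Rightarrow (1)$ are trivial, so the only substantive content is $(4)\Leftrightarrow(2)$ (or equivalently $(2)\Rightarrow(1)$ via Lemma~4.4 and $(1)\Rightarrow(3)$ via Theorem~2.10). By (ii), $R$ is a left PSD ring for $J(R)$, so the condition ``$R$ is a left PSD ring for $I$'' in Corollary~4.5 is automatically satisfied. By (i), ``$R$ is $J(R)$-semiregular'' in Corollary~4.5 translates to ``$R$ is semiregular'' in the present statement.

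Second, I would check that the $J(R)$-version of Definition~4.3 reduces to the usual $\mathscr{P}$-projective cover notion. An epimorphism $f\colon P\to M$ with $P\in\mathscr{P}$, $\ker f\leq J(R)P$, and $\ker f$ PSD in $P$ has $\ker f\ll P$ by the argument used in Proposition~3.3: given $\ker f+L=P$, the PSD condition splits off a direct summand $K\subseteq\ker f\subseteq J(R)P=\mathrm{Rad}(P)$, forcing $\mathrm{Rad}(K)=K$, which together with the relevant ``small radical'' property (inherited through $P\in\mathscr{P}$, since the classes listed in Example~4.2 all have this feature) forces $K=0$. This matches the naming convention ``$\mathscr{P}$-projective cover'' as dropping the ``$J(R)$'' prefix.

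The only real obstacle is the subtle naming issue in step two: ensuring that the unadorned phrase ``$\mathscr{P}$-projective cover'' is interpreted consistently with the $I=J(R)$ case of Definition~4.3. Once that convention is in place (or one simply reads ``$\mathscr{P}$-projective cover'' as ``$\mathscr{P}$-projective $J(R)$-cover''), the corollary is an immediate specialization of Corollary~4.5 and needs no further argument. Hence the proof is one line:
\begin{proof}
Apply Corollary~4.5 with $I=J(R)$, noting that $R$ is a left PSD ring for $J(R)$ and that $R$ is semiregular iff $R$ is $J(R)$-semiregular.
\end{proof}
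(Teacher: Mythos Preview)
Your one-line proof is exactly what the paper intends: Corollary~4.9 is stated without proof as the $I=J(R)$ specialization of Corollary~4.5, using that $R$ is a left PSD ring for $J(R)$ and that $J(R)$-semiregular coincides with semiregular.

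One caution about your side discussion: the argument you sketch to show that a $\mathscr{P}$-projective $J(R)$-cover has $\ker f\ll P$ is not valid as written. You invoke ``$\mathrm{Rad}(K)=K\Rightarrow K=0$'' for a summand $K$ of $P\in\mathscr{P}$, but an arbitrary projectivity class need not consist of modules with this property; Proposition~3.3 gets this from local projectivity, which is not assumed here. Fortunately the paper does not need (or claim) this reduction: the phrase ``$\mathscr{P}$-projective cover'' in Corollary~4.9 is simply the naming convention for ``$\mathscr{P}$-projective $J(R)$-cover,'' parallel to ``$\mathscr{P}$-projective $\delta$-cover'' in Corollary~4.8 abbreviating the $\delta({}_RR)$ case. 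So you should drop that paragraph and keep only the specialization argument.
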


When $\mathscr{P}$ is the class of all direct-projective modules,
Corollary 4.9 gives [17, Corollary 3.4].

\begin{thm} Let $I$ be an ideal of a ring $R$, $\mathscr{P}$ a projectivity class and which is closed
under taking direct summands.  Consider the following statements:
\begin{enumerate}
\item Every finitely generated $R$-module has a $\mathscr{P}$-projective
$I$-cover.
\item Every factor module of $_{R}R$ has
a $\mathscr{P}$-projective $I$-cover.
\item For every countably generated left ideal $K$ of $R$, $R/K$ has
a $\mathscr{P}$-projective $I$-cover.
\item $R$ is $I$-semiperfect.

\item Every simple $R$-module has a $\mathscr{P}$-projective
$I$-cover.
\item Every simple factor module of $_{R}R$ has a $\mathscr{P}$-projective
$I$-cover.
\end{enumerate}
Then $(1)\Rightarrow (2)\Rightarrow (3)\Rightarrow (4)$ and
$(1)\Rightarrow (5)\Rightarrow (6)$, $(4)\Rightarrow (1)$ if $R$ is
a left PSD ring for $I$; and $(6)\Rightarrow (4)$ if $I$ is PSD in
$_{R}R$.
\end{thm}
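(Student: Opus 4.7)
The plan is to prove Theorem 4.10 by exactly mimicking the pattern of Theorem 2.13, with Lemma 4.4 serving as the bridge that converts $\mathscr{P}$-projective $I$-covers into ordinary projective $I$-covers. The trivial implications come first. $(1)\Rightarrow(2)$ holds because every factor module of $_R R$ is cyclic, hence finitely generated. $(2)\Rightarrow(3)$ holds because for any countably generated left ideal $K$, the module $R/K$ is a factor of $_R R$. $(1)\Rightarrow(5)$ holds because simple modules are finitely generated, and $(5)\Rightarrow(6)$ is immediate.

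For $(4)\Rightarrow(1)$ under the hypothesis that $R$ is a left PSD ring for $I$, I would invoke Theorem 2.13 $(4)\Rightarrow(1)$ to conclude that every finitely generated $R$-module admits a projective $I$-cover. Since any projective module is self-projective and therefore belongs to the projectivity class $\mathscr{P}$ by Definition 4.1, such a projective $I$-cover is automatically a $\mathscr{P}$-projective $I$-cover, which is exactly (1). The content, then, lies in $(3)\Rightarrow(4)$ and $(6)\Rightarrow(4)$. For $(3)\Rightarrow(4)$, I would argue that for each countably generated left ideal $K$, the module $R/K$ has a projective $I$-cover: starting from the given $\mathscr{P}$-projective $I$-cover of $R/K$, I would promote it to a $\mathscr{P}$-projective $I$-cover of $R\oplus R/K$ (using the identity on the projective summand $R$), and then apply Lemma 4.4 with $P=R$, $M=R/K$, and the canonical projection $R\to R/K$ as the required epimorphism. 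Once every such $R/K$ has a projective $I$-cover, Theorem 2.13 $(3)\Rightarrow(4)$ yields that $R$ is $I$-semiperfect. The implication $(6)\Rightarrow(4)$ under the hypothesis that $I$ is PSD in $_R R$ proceeds by the identical manoeuvre, substituting simple factors of $_R R$ for $R/K$ and then invoking Theorem 2.13 $(6)\Rightarrow(4)$.

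The main obstacle is verifying that, in the setup for Lemma 4.4, the cover manufactured on $R\oplus R/K$ (respectively $R\oplus T$ for $T$ simple) is genuinely a $\mathscr{P}$-projective $I$-cover. Two points must be checked: first, that $R$ together with the $\mathscr{P}$-module in the given cover produces a new module still in $\mathscr{P}$; this is where the hypothesis that $\mathscr{P}$ is closed under direct summands enters, allowing Lemma 4.4 to conclude projectivity of the relevant summand. Second, that the kernel remains contained in $I$ times the new cover and remains PSD, which follows from Proposition 2.5 applied to the kernels of the two summand maps, together with the fact that the identity map $R\to R$ has zero kernel and is trivially a projective $I$-cover. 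Everything else is formal bookkeeping paralleling the proof of Theorem 2.13.
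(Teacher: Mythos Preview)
Your overall strategy---reduce to Theorem 2.13 by converting $\mathscr{P}$-projective $I$-covers into genuine projective $I$-covers via Lemma 4.4---is exactly the paper's approach (the paper's citation of ``Lemma 3.4'' is evidently a misprint for Lemma 4.4). The trivial implications and $(4)\Rightarrow(1)$ are handled correctly.

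There is, however, a real gap in your treatment of $(3)\Rightarrow(4)$ and $(6)\Rightarrow(4)$. To apply Lemma 4.4 with $P=R$ and $M=R/K$ you need a $\mathscr{P}$-projective $I$-cover of $R\oplus R/K$, and you propose to manufacture one as $1_R\oplus g\colon R\oplus Q\to R\oplus R/K$ from the given cover $g\colon Q\to R/K$. The kernel conditions (containment in $I(R\oplus Q)$ and PSD, via Proposition 2.5) go through, but you must also verify $R\oplus Q\in\mathscr{P}$. The class $\mathscr{P}$ is assumed closed only under direct \emph{summands}, not direct \emph{sums}, so nothing guarantees $R\oplus Q$ lies in $\mathscr{P}$. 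Your remark that ``this is where the hypothesis that $\mathscr{P}$ is closed under direct summands enters'' misidentifies the role of that hypothesis: closure under summands is used \emph{inside} the proof of Lemma 4.4 (to pass from $Q\cong P\oplus\mathrm{Im}\,\alpha\oplus L$ to $P\oplus\mathrm{Im}\,\alpha\in\mathscr{P}$), not to assemble new members of $\mathscr{P}$ by taking sums.

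The route indicated by the paper (visible already in the proof of Corollary 4.5) sidesteps this by applying the standing hypothesis \emph{directly} to the module $R\oplus M$ rather than trying to build a cover from pieces: for instance, under (1) the module $R\oplus R/K$ is itself finitely generated, so (1) furnishes a $\mathscr{P}$-projective $I$-cover of it outright, and Lemma 4.4 then yields a projective $I$-cover of $R/K$; Theorem 2.13 finishes the argument.
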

\begin{proof} Following by Theorem 2.13 and Lemma 3.4.
\end{proof}

\begin{cor} Let $I$ be an ideal of a ring $R$, $\mathscr{P}$ a projectivity class and which is closed
under taking direct summands. Then the following statements are
equivalent.
\begin{enumerate}
\item $R$ is $Z(_{R}R)$-semiperfect.
\item Every finitely generated $R$-module has a $\mathscr{P}$-projective
$Z(_{R}R)$-cover.
\item Every factor module of $_{R}R$ has
a $\mathscr{P}$-projective $Z(_{R}R)$-cover.
\item For every countably generated left ideal $K$ of $R$, $R/K$ has
a $\mathscr{P}$-projective $Z(_{R}R)$-cover.
\item Every simple $R$-module has a $\mathscr{P}$-projective
$Z(_{R}R)$-cover.
\item Every simple factor module of $_{R}R$ has a $\mathscr{P}$-projective
$Z(_{R}R)$-cover.
\end{enumerate}

\end{cor}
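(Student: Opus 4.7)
My plan is to specialize Theorem 4.10 to $I = Z(_{R}R)$. The unconditional implications of that theorem yield $(2)\Rightarrow(3)\Rightarrow(4)\Rightarrow(1)$ and $(2)\Rightarrow(5)\Rightarrow(6)$, so only $(1)\Rightarrow(2)$ and $(6)\Rightarrow(1)$ remain, and in Theorem 4.10 these are conditional on $R$ being a left PSD ring for $Z(_{R}R)$ and on $Z(_{R}R)$ being PSD in $_{R}R$, respectively.

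For $(1)\Rightarrow(2)$ I would invoke the observation recorded after Corollary 2.11: if $R$ is $Z(_{R}R)$-semiregular then $Z(_{R}R)=J(R)\subseteq\delta(_{R}R)$. Under $(1)$, $R$ is in particular $Z(_{R}R)$-semiregular, so the inclusion holds. Combining this with the fact (noted after Theorem 2.10) that $R$ is a left PSD ring for $\delta(_{R}R)$, and with the observation that every submodule of $Z(_{R}R)F$ sits inside $\delta(_{R}R)F$ and is therefore already PSD in $F$, we conclude $R$ is a left PSD ring for $Z(_{R}R)$, and the conditional direction of Theorem 4.10 delivers $(2)$.

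The main obstacle is $(6)\Rightarrow(1)$, since I cannot use $(1)$ to justify PSD of $Z(_{R}R)$ in $_{R}R$ at that stage. My first step is to reduce this PSD property to the inclusion $Z(_{R}R)\subseteq J(R)$: a nonzero projective summand $Re\leq Z(_{R}R)$ of $_{R}R$ would force the essential left ideal $R(1-e)$ to be a direct summand, which is impossible, so the only projective summand of $_{R}R$ inside $Z(_{R}R)$ is zero, and the PSD condition on $Z(_{R}R)\leq {}_{R}R$ collapses to: $Z(_{R}R)+X=R$ implies $X=R$, equivalently $Z(_{R}R)\subseteq J(R)$. I would then derive this inclusion from $(6)$ by contradiction: if some maximal left ideal $M$ fails $Z(_{R}R)\subseteq M$, then $R/M$ is singular simple, and applying $(6)$ yields a $\mathscr{P}$-projective $Z(_{R}R)$-cover $f:P\to R/M$. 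Since $\ker f\leq Z(_{R}R)P\subseteq Z(P)$, any projective summand of $P$ lying inside $\ker f$ is a singular projective module and hence zero; combining this with the PSD of $\ker f$ in $P$ and the simplicity of $R/M$ should force $P$ to be cyclic and eventually contradict $Z(_{R}R)\not\subseteq M$. Once $Z(_{R}R)\subseteq J(R)$ is secured, the conditional direction $(6)\Rightarrow(4)$ of Theorem 4.10 completes the cycle.
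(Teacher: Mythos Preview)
Your overall plan—specialize Theorem~4.10 to $I=Z({}_{R}R)$—is exactly the paper's approach: Corollary~4.11 is stated without proof, parallel to Corollary~2.14, and is meant to follow directly from Theorem~4.10 together with the remark (used before Corollary~2.11) that $Z({}_{R}R)$-semiregularity forces $Z({}_{R}R)=J(R)\subseteq\delta({}_{R}R)$. Your treatment of $(1)\Rightarrow(2)$ is correct and is precisely the argument the paper has in mind: once $R$ is $Z({}_{R}R)$-semiperfect it is $Z({}_{R}R)$-semiregular, hence $Z({}_{R}R)\subseteq\delta({}_{R}R)$, so every finitely generated free module is PSD for $Z({}_{R}R)$ and Theorem~4.10 applies.

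Where your proposal diverges from the paper is in $(6)\Rightarrow(1)$. The paper supplies no separate argument here; it simply lists the equivalence as a corollary. You correctly observe that the hypothesis ``$Z({}_{R}R)$ is PSD in ${}_{R}R$'' needed in Theorem~4.10 is not available a priori, and you try to manufacture it from~$(6)$. That attempt, however, has real gaps. First, the assertion that a singular projective module is zero is not proved and is not a standard fact; your argument that no nonzero idempotent of $R$ lies in $Z({}_{R}R)$ is fine, but it does not extend to arbitrary projective summands of an abstract $P\in\mathscr{P}$. Second, even granting that the projective summand $S$ supplied by PSD is zero, your sketch ``should force $P$ to be cyclic and eventually contradict $Z({}_{R}R)\not\subseteq M$'' is not an argument: one gets $P$ cyclic with $\ker f\ll P$ and $\ker f\leq Z(P)$, but no contradiction with $Z({}_{R}R)\not\subseteq M$ is visible from this.

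In short: your handling of $(1)\Rightarrow(2)$ matches the paper, and for $(6)\Rightarrow(1)$ you have gone beyond what the paper actually writes, but the extra work is incomplete. If you want a self-contained proof of $(6)\Rightarrow(1)$ you will need either a genuine proof that nonzero projective $R$-modules cannot be singular, or a different route that avoids verifying ``$Z({}_{R}R)$ PSD in ${}_{R}R$'' altogether.
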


\begin{cor} Let $I$ be an ideal of a ring $R$, $\mathscr{P}$ a projectivity class and which is closed
under taking direct summands. Then the following statements are
equivalent.
\begin{enumerate}
\item $R$ is $Soc(_{R}R)$-semiperfect.
\item Every finitely generated $R$-module has a $\mathscr{P}$-projective
$Soc(_{R}R)$-cover.
\item Every factor module of $_{R}R$ has
a $\mathscr{P}$-projective $Soc(_{R}R)$-cover.
\item For every countably generated left ideal $K$ of $R$, $R/K$ has
a $\mathscr{P}$-projective $Soc(_{R}R)$-cover.
\item Every simple $R$-module has a $\mathscr{P}$-projective
$Soc(_{R}R)$-cover.
\item Every simple factor module of $_{R}R$ has a $\mathscr{P}$-projective
$Soc(_{R}R)$-cover.

\end{enumerate}
\end{cor}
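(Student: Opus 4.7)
The plan is to derive this corollary as a direct specialization of \thmref{} (the theorem immediately above, i.e.\ Theorem 4.10) to the ideal $I=\mathrm{Soc}(_{R}R)$. Theorem 4.10 already gives us the full chain $(1)\Rightarrow(2)\Rightarrow(3)\Rightarrow(4)$ and $(1)\Rightarrow(5)\Rightarrow(6)$ for \emph{any} ideal $I$ with no additional hypotheses, and provides the ``closing'' implications $(4)\Rightarrow(1)$ and $(6)\Rightarrow(4)$ under two respective side conditions. So the only real work is to verify these two side conditions in the present setting.

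First I would invoke the observation already made in \S 2 (just before Corollary 2.12): whenever $I\leq \mathrm{Soc}(_{R}R)$, the ring $R$ is a left PSD ring for $I$. Specializing this to $I=\mathrm{Soc}(_{R}R)$ itself handles the hypothesis needed for $(4)\Rightarrow(1)$ (and hence gives all of $(1)\Leftrightarrow(2)\Leftrightarrow(3)\Leftrightarrow(4)$ via Theorem 4.10).

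Next I need the condition that $I=\mathrm{Soc}(_{R}R)$ is PSD in $_{R}R$, which is exactly what Theorem 4.10 requires for $(6)\Rightarrow(4)$. This follows immediately from the previous step: since $R$ is a left PSD ring for $\mathrm{Soc}(_{R}R)$, the finitely generated free module $_{R}R$ itself is PSD for $\mathrm{Soc}(_{R}R)$, which by \defn{} unwinding means that every submodule of $\mathrm{Soc}(_{R}R)\cdot R=\mathrm{Soc}(_{R}R)$ is PSD in $_{R}R$; in particular $\mathrm{Soc}(_{R}R)$ itself is PSD in $_{R}R$. Combining this with Theorem 4.10 closes the loop $(5)\Rightarrow(6)\Rightarrow(4)$, proving the equivalence of all six conditions.

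There is essentially no obstacle here: the argument is a bookkeeping application of Theorem 4.10, and both side hypotheses of that theorem are automatic for $I=\mathrm{Soc}(_{R}R)$ once one notes (as the paper does) that $I\leq\mathrm{Soc}(_{R}R)$ forces the left PSD property. The proof can therefore be written as a single line: ``By Theorem 4.10 applied to $I=\mathrm{Soc}(_{R}R)$, using that $R$ is a left PSD ring for $\mathrm{Soc}(_{R}R)$ and that $\mathrm{Soc}(_{R}R)$ is PSD in $_{R}R$.''
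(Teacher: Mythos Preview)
Your proposal is correct and matches the paper's approach: the paper gives no explicit proof for this corollary, treating it as an immediate specialization of Theorem~4.10 to $I=\mathrm{Soc}(_{R}R)$ together with the observation (recorded just before Corollary~2.12) that $I\leq\mathrm{Soc}(_{R}R)$ forces $R$ to be a left PSD ring for $I$. Your verification that both side hypotheses of Theorem~4.10 are automatic in this case is exactly what is intended.
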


\begin{cor} Let $\mathscr{P}$ be any projectivity class and which is
closed under taking direct summands. The following statements are
equivalent for a ring $R$.
\begin{enumerate}
\item $R$ is a left $\delta$-semiperfect ring.

\item Every finitely generated $R$-module has a $\mathscr{P}$-projective
$\delta$-cover.
\item Every factor module of $_{R}R$ has
a $\mathscr{P}$-projective $\delta$-cover.
\item For every countably generated left ideal $K$ of $R$, $R/K$ has
a $\mathscr{P}$-projective $\delta$-cover.

\item Every simple $R$-module has a $\mathscr{P}$-projective
$\delta$-cover.
\item Every simple factor module of $_{R}R$ has a $\mathscr{P}$-projective
$\delta$-cover.

\end{enumerate}

\end{cor}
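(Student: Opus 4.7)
The plan is to invoke Theorem 4.10 directly, specialized to the ideal $I = \delta(_{R}R)$. Recall from the preliminaries that $R$ is $\delta$-semiperfect if and only if $R$ is $\delta(_{R}R)$-semiperfect, so condition (1) of Corollary 4.14 is just condition (4) of Theorem 4.10 with $I = \delta(_{R}R)$, and similarly conditions (2)--(6) here correspond to conditions (1)--(3) and (5)--(6) of Theorem 4.10 (after relabeling). Thus the entire statement reduces to verifying that the two hypotheses in Theorem 4.10 are met for this particular choice of $I$: namely, $R$ must be a left PSD ring for $\delta(_{R}R)$, and $\delta(_{R}R)$ must be PSD in $_{R}R$.

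First I would record the first hypothesis, which is exactly the fact quoted by the author right after Theorem 2.10: when $I = \delta(_{R}R)$, every finitely generated free left $R$-module is PSD for $I$. This follows from the standard theory of $\delta$-small submodules as in [20] and [1, Proposition 3.6]: in a projective module $P$, any submodule of $\delta(P) = \delta(_{R}R)P$ that complements a submodule $X$ (i.e., $N + X = P$) admits a projective direct-summand complement sitting inside $N$, which is precisely the PSD property. Granted this, the implications $(1)\Rightarrow (2)\Rightarrow (3)\Rightarrow (4)$ and $(1)\Rightarrow (5)\Rightarrow (6)$ come from Theorem 4.10, and the implication $(4)\Rightarrow (1)$ also comes from Theorem 4.10 since $R$ is left PSD for $\delta(_{R}R)$.

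Next I would handle $(6)\Rightarrow (1)$. By Theorem 4.10 this requires that $\delta(_{R}R)$ be PSD in $_{R}R$. This is the same PSD property applied to the ideal $\delta(_{R}R)$ itself viewed as a submodule of $_{R}R$: whenever $\delta(_{R}R) + X = R$ with $X$ a left ideal, one needs a projective summand $Y \subseteq \delta(_{R}R)$ with $R = Y \oplus X$. Again this is a standard consequence of the theory of $\delta$-small submodules and the corresponding lifting property established in [20]; once invoked, Theorem 4.10 yields $(6)\Rightarrow (4)\Rightarrow (1)$.

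The only real work therefore consists in citing (or briefly re-deriving) the two PSD-type facts for $\delta(_{R}R)$; the main potential obstacle is making sure that the PSD condition at the level of the ring coincides with the lifting property used in [20] for $\delta$-small submodules, but this is already implicit in the author's remark following Theorem 2.10 and in the passage of Theorem 2.13 to its $\delta$-analogue in Corollary 3.12. With these ingredients in place, Corollary 4.14 becomes a direct corollary of Theorem 4.10, exactly as the author's one-line proof suggests.
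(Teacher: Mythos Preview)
Your proposal is correct and matches the paper's approach: the corollary is obtained from Theorem 4.10 by taking $I=\delta(_{R}R)$ and invoking the facts (already noted after Theorems 2.10 and 2.13) that $R$ is a left PSD ring for $\delta(_{R}R)$ and that $\delta(_{R}R)$ is PSD in $_{R}R$. The paper gives no separate proof for this corollary, so your explicit verification of the two PSD hypotheses is exactly what is implicitly required.
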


\begin{cor} Let $\mathscr{P}$ be any projectivity class and which is
closed under taking direct summands. The following statements are
equivalent for a ring $R$.
\begin{enumerate}
\item $R$ is a left semiperfect ring.

\item Every finitely generated $R$-module has a $\mathscr{P}$-projective
cover.
\item Every factor module of $_{R}R$ has
a $\mathscr{P}$-projective cover.
\item For every countably generated left ideal $K$ of $R$, $R/K$ has
a $\mathscr{P}$-projective cover.
\item Every simple $R$-module has a $\mathscr{P}$-projective
cover.
\item Every simple factor module of $_{R}R$ has a $\mathscr{P}$-projective
cover.

\end{enumerate}

\end{cor}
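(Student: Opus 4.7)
The plan is to read off Corollary 4.14 directly from Theorem 4.10 by specializing the ambient ideal to $I = J(R)$. By the convention recorded in the Preliminaries, $R$ is semiperfect if and only if $R$ is $J(R)$-semiperfect, so condition (1) of the corollary matches condition (4) of Theorem 4.10. Conditions (2)--(6) of the corollary are literal translations of conditions (1)--(3), (5) and (6) of Theorem 4.10 with the ideal $I$ replaced by $J(R)$, since a $\mathscr{P}$-projective $J(R)$-cover is exactly a $\mathscr{P}$-projective cover in the classical sense (cf. the remark after Definition 2.6).

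To invoke the full chain of implications of Theorem 4.10 I must verify its two auxiliary hypotheses: first, that $R$ is a left PSD ring for $J(R)$; second, that $J(R)$ is PSD in $_{R}R$. For the first, every finitely generated free left $R$-module $F$ satisfies $J(R)F \ll F$ by Nakayama's Lemma, so any submodule of $J(R)F$ is small in $F$ and therefore trivially PSD in $F$ by taking $S = 0$ as the projective summand. For the second, the same argument applied to the cyclic module $_{R}R$ shows $J(R)$ is small in $_{R}R$, hence PSD. With both hypotheses verified, Theorem 4.10 yields (1)$\Rightarrow$(2)$\Rightarrow$(3)$\Rightarrow$(4), (1)$\Rightarrow$(5)$\Rightarrow$(6), (4)$\Rightarrow$(1), and (6)$\Rightarrow$(4) simultaneously, which assembles into the equivalence of all six statements.

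I do not anticipate a genuine obstacle: the argument is essentially bookkeeping on top of Theorem 4.10. The only point requiring any care is the "small, hence PSD" observation for $J(R)$; once that is noted, the implication (6)$\Rightarrow$(4) closes the final loop and no further work is needed.
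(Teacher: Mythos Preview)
Your approach is correct and matches the paper's: Corollary 4.14 is stated without proof precisely because it is the specialization $I = J(R)$ of Theorem 4.10, and the paper has already recorded (just after Theorem 2.10) that $R$ is a left PSD ring for $J(R)$; your Nakayama argument makes this explicit and simultaneously handles the hypothesis that $J(R)$ is PSD in $_{R}R$. One small remark: the reference to ``the remark after Definition 2.6'' to identify $\mathscr{P}$-projective $J(R)$-covers with $\mathscr{P}$-projective covers is slightly off, since that remark concerns genuinely projective $P$, whereas here $P$ is only assumed to lie in $\mathscr{P}$; in the paper's usage ``$\mathscr{P}$-projective cover'' is simply shorthand for the $I=J(R)$ case of Definition 4.3, so no separate identification is needed.
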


\section{\bf $I$-semiregular($I$-semiperfect) rings and $I$-supplemented modules}

It is well know that a ring $R$ is semiperfect if and only if
$R_{R}$ is a supplemented module if and only if $_{R}R$ is a
supplemented module. We also know that a ring $R$ is semiregular if
and only if $R_{R}$ is a finitely supplemented module if and only if
$_{R}R$ is a finitely supplemented module. Here we introduce the
notion of $I$-supplemented modules and use it to characterize
$I$-semiregular(semiperfect) rings.

Let $R$ be a ring $I$ an ideal of $R$, $M$ a module and $N, L\leq
M$. $N$ is called a \emph{supplement} of $L$ in $M$ if $N + L = M$
and $N$ is minimal with respect to this property. Equivalently, $M =
N + L$ and $N\cap L\ll N$. A module $M$ is called
\emph{supplemented} if every submodule of $M$ has a supplement in
$M$. $N$ is called a \emph{$\delta$-supplement} [6] of $L$ if
$M=N+L$ and $N\cap L\ll_{\delta}N$. $M$ is called a
\emph{$\delta$-supplemented module} if every submodule of $M$ has a
$\delta$-supplement. A module $M$ is said to be
\emph{$\delta$-lifting} [6] if for any submodule $N$ of $M$, there
exists a direct summand $K$ of $M$ such that $K\leq N$ and
$N/K\ll_{\delta} M/K$, equivalently, for every submodule $N$ of $M$,
$M$ has a decomposition with $M=M_1\oplus M_2$, $M_1\leq N$ and
$M_2\cap N$ is $\delta$-small in $M_2$. $N$ is DM in $M$ [1] if
there is a summand $S$ of $M$ such that $S\leq N$ and $M=S+X$,
whenever $N+X=M$ for a submodule $X$ of $M$. $M$ is DM for $I$ if
any submodule of $IM$ is DM in $M$. $R$ is a left DM ring for $I$ if
for any finitely generated free left $R$-module is DM for $I$.

\begin{defn} Let $R$ be a ring and $I$ an ideal of $R$, $M$ a
module. $M$ is called an $I$-supplemented module (finitely
$I$-supplemented module) if for every submodule (finitely generated
submodule ) $X$ of $M$, there is a projective submodule $Y$ of $M$
such that $X+Y=M$, $X\cap Y\subseteq IY$ and $X\cap Y$ is DM in $Y$.
\end{defn}

\begin{thm}  Let $R$ be a ring.
The following statements are equivalent for a projective
module $M$.

\begin{enumerate}
\item $M$ is a $J(R)$-supplemented module (a $\delta(_{R}R)$-supplemented module,
respectively).
\item $M$ is a supplemented module ( a $\delta$-supplemented module,
respectively).

\end{enumerate}
\end{thm}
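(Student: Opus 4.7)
The plan is to handle the two equivalences ($J(R)$-supplemented $\leftrightarrow$ supplemented, and $\delta(_R R)$-supplemented $\leftrightarrow$ $\delta$-supplemented) in parallel, the only change between them being to replace $\mathrm{Rad}$ by $\delta$ and $\ll$ by $\ll_{\delta}$ throughout. I describe the $J(R)$-version below; the $\delta$-version is identical mutatis mutandis.

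For $(1)\Rightarrow(2)$, given $X\leq M$, let $Y\leq M$ be the projective submodule furnished by the $J(R)$-supplemented definition, so $X+Y=M$, $X\cap Y\subseteq J(R)Y$, and $X\cap Y$ is DM in $Y$. I claim $X\cap Y\ll Y$, which already makes $Y$ an ordinary supplement of $X$. Suppose $(X\cap Y)+Z=Y$ for some $Z\leq Y$; the DM condition yields a direct summand $S$ of $Y$ with $S\leq X\cap Y$ and $Y=S+Z$. Since $Y$ is projective, $J(R)Y=\mathrm{Rad}(Y)$, so $S\leq \mathrm{Rad}(Y)$. Writing $Y=S\oplus T$ and decomposing $\mathrm{Rad}(Y)=\mathrm{Rad}(S)\oplus \mathrm{Rad}(T)$ forces $S=\mathrm{Rad}(S)$; the fact recalled in Section~3 that a (locally) projective module with $\mathrm{Rad}=\,$itself must vanish then gives $S=0$, whence $Z=Y$ and $X\cap Y\ll Y$, as required.

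For $(2)\Rightarrow(1)$, given a supplemented projective $M$ and $X\leq M$, the plan is to produce a projective direct summand of $M$ that acts as a $J(R)$-supplement of $X$. I invoke the classical Mares-type theorem that a projective supplemented (resp.\ $\delta$-supplemented) module is semiperfect (resp.\ $\delta$-semiperfect), so that $M/X$ admits a projective cover (resp.\ projective $\delta$-cover) $p\colon P\to M/X$. Projectivity of $M$ lifts the canonical projection $\pi\colon M\to M/X$ along $p$ to $\alpha\colon M\to P$; smallness of $\ker p$ makes $\alpha$ surjective, and projectivity of $P$ then splits $\alpha$, giving $M=\ker\alpha\oplus Y$ with $Y\cong P$. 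The summand $Y$ is projective, $X+Y=M$ (because $\pi(Y)=M/X$), and $X\cap Y\ll Y$; hence $X\cap Y\subseteq \mathrm{Rad}(Y)=J(R)Y$, and the DM condition is realized trivially by the zero summand. This verifies the $J(R)$-supplemented property.

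The main obstacle is the $(2)\Rightarrow(1)$ direction, whose real content is the classical identification of projective supplemented (resp.\ $\delta$-supplemented) modules with semiperfect (resp.\ $\delta$-semiperfect) ones; once that input is available, the construction of a projective direct summand supplement via lift-and-split is routine, and the remaining $I$-supplement conditions drop out immediately from the projective identities $\mathrm{Rad}(P)=J(R)P$ and $\delta(P)=\delta(_R R)P$.
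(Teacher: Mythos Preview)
Your $(1)\Rightarrow(2)$ argument in the $J(R)$-case coincides with the paper's. The claim that the $\delta$-version is ``identical mutatis mutandis,'' however, conceals a genuine gap: from $S=\delta(S)$ with $S$ projective you \emph{cannot} conclude $S=0$. Any nonzero projective semisimple module is a counterexample, since such a module has no singular simple quotient and hence satisfies $\delta(S)=S$. The paper's proof diverges precisely here: for the $\delta$-case it deduces only that the summand $K$ is projective and semisimple, and then invokes Zhou's characterization of $\delta$-smallness ([20, Lemma~1.2], namely $N\ll_\delta M$ iff whenever $N+X=M$ one can split off a projective semisimple piece of $N$ complementing $X$) to obtain $X\cap Y\ll_\delta Y$. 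So the two cases are not parallel at this last step, and your blanket ``replace $\mathrm{Rad}$ by $\delta$'' does not go through.

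For $(2)\Rightarrow(1)$ your route via Mares-type semiperfectness and a lift--split construction is correct in the $J(R)$-case but considerably heavier than the paper's. The paper simply uses the standard fact that a supplement in a projective module is automatically a direct summand; the supplement $Y$ of $X$ is then itself projective, $X\cap Y\ll Y$ gives $X\cap Y\subseteq\mathrm{Rad}(Y)=J(R)Y$, and a small submodule is trivially DM. In the $\delta$-case the paper likewise short-circuits your construction by quoting that a projective $\delta$-supplemented module is $\delta$-lifting, which directly yields a direct-summand $\delta$-supplement. Note also that your lift--split argument, transported verbatim to the $\delta$-setting, stumbles at ``smallness of $\ker p$ makes $\alpha$ surjective'': $\ker p\ll_\delta P$ only gives $P=\alpha(M)\oplus S$ for some projective semisimple $S\subseteq\ker p$, not $\alpha(M)=P$, so an extra adjustment is needed there as well.
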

\begin{proof} $``(1)\Rightarrow (2)"$ Let $M$ be a $J(R)$-supplemented
module (a $\delta(_{R}R)$-supplemented module, respectively). Then
for every submodule $X$ of $M$, there is a projective submodule $Y$
of $M$ such that $X+Y=M$ , $X\cap Y\subseteq J(R)Y$ ($X\cap
Y\subseteq \delta(_{R}R)Y$) and $X\cap Y$ is DM in $Y$. Next we
shall show that $X\cap Y\ll Y$ ($X\cap Y\ll_{\delta} Y$). Assume
that $X\cap Y+L=Y, L\leq Y$. Note that $X\cap Y$ is DM in $Y$, there
is a summand $K$ of $Y$ which is contained in $X\cap Y$ such that
$K+L=Y$. Write $Y=K\oplus K', K'\leq M$,  then $Rad(K)\oplus
Rad(K')=Rad(Y)$ ($\delta(K)\oplus \delta(K')=\delta(Y)$). Since
$K\subseteq Rad(Y)$ ($K\subseteq \delta(Y)$), $K=Rad(K)$
($K=\delta(K)$). Since $K$ is projective, $K=0$ ($K$ is semisimple,
thus $X\cap Y\ll_{\delta} Y$ by [20, Lemma 1.2]), and hence $L=Y$,
so $X\cap Y\ll Y$.

$``(2)\Rightarrow (1)"$ Let $M$ be a supplemented module. Then for
every submodule $X$ of $M$, there a submodule $Y$ of $M$ such that
$X+Y=M$ and $X\cap Y\ll Y$. Since $M$ is projective, $Y$ is a direct
summand of $M$, and hence $Y$ is projective. It is clear that $X\cap
Y\subseteq Rad(Y)=J(R)Y$ and $X\cap Y$ is DM in $Y$. (Let $M$ be a
$\delta$-supplemented module. Since $M$ is projective, $M$ is
$\delta$-lifting. Thus for every submodule $X$ of $M$, there is a
direct summand $Y$ of $M$ such that $M=X+Y$ and $X\cap Y\ll_{\delta}
Y$. The rest is obvious.)

\end{proof}

\begin{thm} Let $R$ be a ring and $I$ an ideal of $R$, $M$ a projective
module. Consider the following conditions:
\begin{enumerate}
\item $M$ is an $I$-supplemented module.
\item $M$ is an $I$-semiperfect module.
\end{enumerate}
Then $(1)\Rightarrow (2)$,  and $(2)\Rightarrow (1)$ if $M$ is DM
for $I$.
\end{thm}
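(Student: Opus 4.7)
The plan is to handle the two directions separately: for $(1) \Rightarrow (2)$, I will extract from the $I$-supplement $Y$ of $K$ a projective direct summand $T$ of $Y$ for which the natural map $T \to M/K$ is a genuine projective $I$-cover, and then transfer that cover to a decomposition of $M$ by an application of Lemma 2.8. For $(2) \Rightarrow (1)$ (assuming $M$ is DM for $I$), I will simply use the complement of the $I$-semiperfect decomposition as the required projective supplement and check the three clauses of Definition 5.1 directly.

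For $(1) \Rightarrow (2)$, given $K \leq M$, the $I$-supplemented hypothesis yields a projective $Y \leq M$ with $M = K + Y$, $K \cap Y \leq IY$, and $K \cap Y$ DM in $Y$. The DM property supplies a summand $S$ of $Y$ with $S \leq K \cap Y$; writing $Y = S \oplus T$, we have $S \leq K$, hence $M = K + T$, and the modular law gives $K \cap Y = S \oplus (K \cap T)$. Splitting $IY = IS \oplus IT$ then forces $K \cap T \leq IT$. The key technical step is to show that $K \cap T \ll T$: if $L \leq T$ satisfies $K \cap T + L = T$, then $K \cap Y + L \supseteq S + (K \cap T) + L = Y$, so the DM condition gives $Y = S + L$; since $L \leq T$ and $Y = S \oplus T$, this forces $L = T$. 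Hence $K \cap T$ is small in $T$, and therefore vacuously PSD in the projective module $T$, so the restricted quotient $\pi : T \to M/K$, $t \mapsto t + K$, is a projective $I$-cover. Applying Lemma 2.8 to $\pi$ and the canonical epimorphism $M \to M/K$ from the projective $M$ produces a decomposition $M = A' \oplus B'$ in which $B' \leq K$ is a projective direct summand of $M$ and $K \cap A' \leq IA' \leq IM$, which is exactly the $I$-semiperfect decomposition of $M$ relative to $K$.

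For $(2) \Rightarrow (1)$, let $X \leq M$; the $I$-semiperfect hypothesis gives $M = A \oplus B$ with $A \leq X$ projective and $X \cap B \leq IM$. Take $Y := B$, which is projective as a summand of the projective module $M$. Then $X + Y \supseteq A + B = M$, and any element of $X \cap B$ lies in $IM = IA \oplus IB$ and in $B$, and hence in $IB = IY$. Since $M$ is DM for $I$ and $X \cap B \leq IM$, the submodule $X \cap B$ is DM in $M$; a routine argument exactly parallel to Lemma 2.2 for PSD (using that $B$ is a summand of $M$, and modular-law bookkeeping) shows that DM in $M$ passes to DM in $B$ for submodules of $B$, so $X \cap Y$ is DM in $Y$. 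This verifies all three clauses of Definition 5.1.

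The step I expect to be the main obstacle is the smallness claim in the first direction, namely upgrading the bare DM condition on $K \cap Y$ in $Y$ to the genuine inclusion $K \cap T \ll T$ after splitting off $S$; once this is secured, the rest reduces to summand bookkeeping and a direct appeal to Lemma 2.8.
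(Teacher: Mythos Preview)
Your proof is correct in both directions, and for $(2)\Rightarrow(1)$ it coincides with the paper's argument: take $Y=B$ from the $I$-semiperfect decomposition $M=A\oplus B$, check $X\cap B\subseteq IM\cap B=IB$, and pass the DM property from $M$ down to the summand $B$. The paper cites [1, Lemma~3.2] for that last step; your sketch ``parallel to Lemma~2.2'' is exactly that lemma, and it goes through because the DM-witness $S\leq X\cap B\leq B$ is automatically a summand of $B$.

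For $(1)\Rightarrow(2)$ you take a genuinely different route from the paper. The paper simply notes that the map $Y\to M/X$, $y\mapsto y+X$, has kernel $X\cap Y\subseteq IY$ which is DM in $Y$, so this is a projective $I$-cover \emph{in the sense of~[1]} (where covers are defined via DM rather than PSD), and then invokes [1, Lemma~3.10] to produce the decomposition of $M$. You instead stay within the paper's own PSD-based framework: you split off the DM-witness $S$ from $Y$, pass to the complement $T$, and prove the sharper fact $K\cap T\ll T$ (your key computation $Y=S+L\Rightarrow T=L$ via the modular law is correct), so that $T\to M/K$ is a projective $I$-cover in the sense of Definition~2.6, after which Lemma~2.8 applies. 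Your approach is longer but entirely self-contained and avoids the external reference; the paper's is a two-line citation. Both land on the same decomposition of $M$.
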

\begin{proof} $``(1)\Rightarrow (2)"$ Let $M$ be an $I$-supplemented
module, then for every submodule $X$ of $M$, there is a projective
submodule $Y$ of $M$ such that $M=X+Y$, $X\cap Y\subseteq IY$ and
$X\cap Y$ is DM in $Y$. We define $f: Y\rightarrow M/X$ be such that
$f(y)=y+X$. Then $f$ is an epimorphism with $Kerf=X\cap Y$, and
hence $Y$ is a projective $I$-cover (in the sense of [1]) of $M/X$.
The rest is obvious by [1, Lemma 3.10].

$``(2)\Rightarrow (1)"$ Let $M$ be an $I$-semiperfect module, then
for every submodule $X$ of $M$, there is a decomposition $M=A\oplus
Y$ such that $A$ is projective, $A\subseteq X$ and $X\cap Y\subseteq
IM$. Thus $M=X+Y$, $Y$ is projective, $X\cap Y\subseteq IY$. Since
$M$ is DM for $I$, $X\cap Y$ is DM in $Y$ by [1, Lemma 3.2], as
desired.

\end{proof}

\begin{cor} Let $M$ be a projective
module with $Rad(M)\ll M$ ($\delta(M)\ll_{\delta}M$). Then
 $M$ is a ($\delta$-)supplemented module if and only if
 $M$ is a ($\delta$-)semiperfect module.
\end{cor}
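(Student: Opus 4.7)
The plan is to derive this corollary by combining Theorem 5.3 with Theorem 5.2. First I would observe that, for a projective module $M$, one has $J(R)M=Rad(M)$ and analogously $\delta(_{R}R)M=\delta(M)$, so that being ($\delta$-)semiperfect coincides with being $J(R)$-semiperfect (respectively $\delta(_{R}R)$-semiperfect), while the matching coincidence on the supplemented side is exactly Theorem 5.2. Hence it suffices to verify the DM hypothesis of Theorem 5.3 for $I=J(R)$ (respectively $I=\delta(_{R}R)$), after which that theorem yields the desired equivalence of $I$-supplemented and $I$-semiperfect.

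For $I=J(R)$ the verification is immediate: the hypothesis $Rad(M)\ll M$ gives $J(R)M\ll M$, so every submodule $N$ of $J(R)M$ is small in $M$; if $N+X=M$ then $X=M$, and the trivial summand $S=0$ witnesses that $N$ is DM in $M$. For $I=\delta(_{R}R)$, projectivity gives $\delta(_{R}R)M=\delta(M)\ll_{\delta}M$, so every submodule $N$ of $\delta(_{R}R)M$ is $\delta$-small in $M$. Given $N+X=M$, I would invoke the characterization of $\delta$-small submodules from [20, Lemma 1.2] to produce a semisimple projective direct summand $S$ of $M$ with $S\leq N$ and $M=S\oplus X$, verifying that $N$ is DM in $M$.

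The main obstacle is the $\delta$-case: unlike the Jacobson setting, $\delta$-smallness is strictly weaker than smallness and does not force $X=M$, so the trivial choice $S=0$ no longer works. Extracting an honest direct summand of $M$ sitting inside $N$ requires the structural content of Zhou's lemma on $\delta$-small submodules, and confirming that the semisimple projective summand it produces indeed lies in $N$ (rather than merely in $\delta(M)$) is the only nontrivial point in the argument.
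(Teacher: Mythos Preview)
Your proposal is correct and follows the route the paper intends: the corollary is stated without proof, immediately after Theorems 5.2 and 5.3, and is meant to be read off from those two results together with the observation that the smallness hypotheses $Rad(M)\ll M$ (resp.\ $\delta(M)\ll_\delta M$) make $M$ DM for $I=J(R)$ (resp.\ $I=\delta(_RR)$). Your verification of the DM condition in both cases is exactly what is required, and your final ``obstacle'' in the $\delta$-case is in fact no obstacle at all: Zhou's Lemma 1.2 already guarantees that the projective semisimple summand lies inside $N$, not merely inside $\delta(M)$, so the DM witness comes for free.
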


\begin{cor} Let $R$ be a left DM ring and $I$ an ideal of $R$. Then $R$ is an
$I$-semiperfect ring if and only if $_{R}R$ is an $I$-supplemented
module.
\end{cor}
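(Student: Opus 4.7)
The plan is to specialize Theorem 5.3 to the module $M={}_{R}R$. Observe first that by the very definitions recalled in the introduction and in Section 5, the ring $R$ is $I$-semiperfect if and only if $_{R}R$ is an $I$-semiperfect module; likewise, by Definition 5.1, $R$ is $I$-supplemented as a ring statement exactly when $_{R}R$ is an $I$-supplemented module. So the corollary is really a statement about the left regular module $_{R}R$, which is projective, and Theorem 5.3 is tailor-made for this situation.

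For the forward implication, assume $_{R}R$ is $I$-supplemented. Since $_{R}R$ is projective, Theorem 5.3 $(1)\Rightarrow(2)$ applies with no additional hypothesis, yielding that $_{R}R$ is $I$-semiperfect, i.e., $R$ is an $I$-semiperfect ring. No DM assumption is required in this direction.

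For the converse, assume $R$ is $I$-semiperfect, so $_{R}R$ is an $I$-semiperfect module. To invoke Theorem 5.3 $(2)\Rightarrow(1)$, I must verify that $_{R}R$ is DM for $I$. By hypothesis $R$ is a left DM ring for $I$, which by definition means every finitely generated free left $R$-module is DM for $I$. The module $_{R}R$ is itself finitely generated and free (it is the rank-one free left $R$-module), so it satisfies the DM condition for $I$. Hence Theorem 5.3 $(2)\Rightarrow(1)$ applies and gives that $_{R}R$ is $I$-supplemented.

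There is essentially no obstacle: the substantive work has already been done in Theorem 5.3, and the only point to check is that the ring-level hypothesis ``left DM ring for $I$'' transfers to the module-level hypothesis ``$_{R}R$ is DM for $I$,'' which is immediate from the definition since $_{R}R$ itself is a finitely generated free module.
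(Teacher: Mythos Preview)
Your argument is correct and matches the paper's intended approach: Corollary 5.5 is stated without proof in the paper, as an immediate specialization of Theorem 5.3 to the projective module $M={}_{R}R$, and your write-up spells out exactly this. The only (harmless) slip is that you label the directions backwards—what you call the ``forward implication'' (assuming ${}_{R}R$ is $I$-supplemented) is actually the converse of the stated biconditional—but the mathematics in each direction is right.
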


Write $I=J(R)$ or $\delta(_{R}R)$ in Corollary 5.5, since $R$ is a
left DM ring, we have the following.

\begin{cor} The following statements are equivalent for a ring $R$.
\begin{enumerate}
\item $R$ is ($\delta$-) semiperfect;
\item $_{R}R$ is a ($\delta$-) supplemented module;
\item $R_{R}$ is a ($\delta$-) supplemented module;
\item $_{R}R$ is a $J(R)$($\delta(_{R}R)$)-supplemented module;
\item $R_{R}$ is a $J(R)$($\delta(_{R}R)$)-supplemented module.
\end{enumerate}
\end{cor}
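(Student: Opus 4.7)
The plan is to package the statement as two parallel three-way equivalences: on the left side, (1) $\Leftrightarrow$ (4) $\Leftrightarrow$ (2); on the right side, (1) $\Leftrightarrow$ (5) $\Leftrightarrow$ (3). The first chain will come from Corollary 5.5 (applied with $I = J(R)$ or $\delta(_R R)$) combined with Theorem 5.2 (applied to the projective module $_R R$). The right-hand chain will be obtained symmetrically, using the well-known left-right symmetry of semiperfect rings (and of $\delta$-semiperfect rings, as recorded in the preliminaries for the companion notion of semiregularity and established analogously for the semiperfect case).

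The only genuine hypothesis that must be supplied before invoking Corollary 5.5 is the DM-ring condition on $R$ for the chosen $I$. I would verify this as a preliminary observation: for a finitely generated free left $R$-module $F$, the submodule $J(R)F$ is small in $F$ (respectively, $\delta(_R R)F$ is $\delta$-small in $F$), since $J(R)$ is small in $_R R$ (respectively, $\delta(_R R)$ is $\delta$-small in $_R R$) and smallness passes to finite free sums. Consequently, for any $N \subseteq IF$ and any $X \leq F$ with $N + X = F$, smallness forces $X = F$, and the zero submodule serves as a projective direct summand $S \leq N$ with $S + X = F$. Thus $N$ is DM in $F$, confirming that $R$ is a left (and, by the symmetric argument, right) DM ring for $I = J(R)$ and for $I = \delta(_R R)$. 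This same step takes care of both the semiperfect and the $\delta$-semiperfect readings.

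With the DM hypothesis now available, Corollary 5.5 applied with $I = J(R)$ yields (1) $\Leftrightarrow$ (4) in the semiperfect case, and with $I = \delta(_R R)$ yields (1) $\Leftrightarrow$ (4) in the $\delta$-semiperfect case, using the identification $J(R)$-semiperfect $=$ semiperfect and $\delta(_R R)$-semiperfect $=$ $\delta$-semiperfect noted in the preliminaries. Theorem 5.2 applied to the projective module $_R R$ gives (2) $\Leftrightarrow$ (4) in each case. Running the mirror argument over $R_R$ (which is projective as a right module, and for which the right-sided counterparts of Theorem 5.2 and Corollary 5.5 hold by the same proofs) gives (3) $\Leftrightarrow$ (5) and (1) $\Leftrightarrow$ (5), completing the equivalences. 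The main obstacle — really more of a technical checkpoint than an obstacle — is precisely the DM verification; once one notices that smallness of $IF$ lets us take $S = 0$, everything else is assembly from Theorem 5.2 and Corollary 5.5.
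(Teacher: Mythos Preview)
Your overall architecture matches the paper's: the proof there is a one-line citation of Theorems 5.2 and 5.3 (equivalently, Theorem 5.2 together with Corollary 5.5), preceded by the remark that $R$ is a left DM ring for $I=J(R)$ and for $I=\delta(_RR)$. So the assembly step is fine.

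The gap is in your verification of the DM-ring condition in the $\delta$ case. You argue that for $N\subseteq IF$ with $N+X=F$, ``smallness forces $X=F$'' and then take $S=0$. That is correct for $I=J(R)$, since $J(R)F=\operatorname{Rad}(F)\ll F$ when $F$ is finitely generated. It fails for $I=\delta(_RR)$: $\delta$-smallness of $N$ in $F$ does \emph{not} give $X=F$; it only gives $X=F$ under the additional hypothesis that $F/X$ is singular. What $\delta$-smallness does give, via Zhou's Lemma~1.2 in \cite{Z} (the result cited in the proof of Theorem~5.2), is a projective semisimple submodule $Y\subseteq N$ with $F=X\oplus Y$. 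This $Y$ is then a direct summand of $F$ contained in $N$ satisfying $F=Y+X$, which is exactly what DM requires. So the DM condition for $\delta(_RR)$ holds, but the witnessing summand need not be zero; your ``$S=0$'' shortcut is not available here. Once you replace that step by the appeal to Zhou's lemma, the rest of your plan goes through unchanged.
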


\begin{proof} It follows by Theorem 5.2 and 5.3.
\end{proof}

Similarly, we obtain the following results.

\begin{thm} Let $R$ be a left DM ring and $I$ an ideal of $R$. Then $R$ is an
$I$-semiregular ring if and only if $_{R}R$ is a finitely
$I$-supplemented module if and only if $R_{R}$ is a finitely
$I$-supplemented module.
\end{thm}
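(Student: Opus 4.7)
My plan is to mirror the proof of Theorem 5.3, replacing ``submodule'' by ``finitely generated submodule'' throughout so that the $I$-semiperfect condition gets replaced by the $I$-semiregular condition. The key input is the standard characterization of $I$-semiregular rings (see, e.g., [19]), namely that $R$ is $I$-semiregular iff for every finitely generated left ideal $K$ of $R$ there is a decomposition $R = A \oplus B$ with $A$ projective, $A \subseteq K$, and $K \cap B \subseteq I$; equivalently, under the present DM ring hypothesis, iff every such $R/K$ admits a projective $I$-cover in the sense of [1].

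For the forward implication, I fix a finitely generated left ideal $X$ of $_{R}R$ and apply the characterization above to obtain $R = A \oplus Y$ with $A$ projective, $A \subseteq X$, and $X \cap Y \subseteq I$. Then $R = X + Y$ and $Y$ is projective. To upgrade $X \cap Y \subseteq I$ to $X \cap Y \subseteq IY$, I use that the decomposition $R = A \oplus Y$ yields $I = IR = IA \oplus IY$ (with $IA \subseteq A$), so $X \cap Y \subseteq I \cap Y = IY$. The DM condition on $X \cap Y$ in $Y$ then comes from the standing hypothesis: $R$ is a left DM ring for $I$, and the DM-analogue of Corollary 2.3 shows that the direct summand $Y$ of $R$ is itself DM for $I$; since $X \cap Y \subseteq IY$, it is DM in $Y$. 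This makes $_{R}R$ finitely $I$-supplemented.

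For the converse, suppose $_{R}R$ is finitely $I$-supplemented and let $X$ be a finitely generated left ideal. I pick the projective $Y$ guaranteed by the definition, so $R = X + Y$, $X \cap Y \subseteq IY$, and $X \cap Y$ is DM in $Y$. The canonical map $f \colon Y \to R/X$, $y \mapsto y + X$, is an epimorphism with $\ker f = X \cap Y$, which exhibits $Y$ as a projective $I$-cover of $R/X$ in the sense of [1]. Applying the characterization quoted above then delivers $I$-semiregularity of $R$. The equivalence with the right-handed statement is immediate from the left-right symmetry of $I$-semiregularity noted in Section 1.

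The only subtle point -- the main obstacle to watch for -- is keeping the paper's own notion of projective $I$-cover (Definition 2.6, with PSD) distinct from that of [1] (with DM). Because the hypothesis here is ``left DM ring'' rather than ``left PSD ring,'' the argument must invoke the DM-version of the characterization from [1] in place of Theorem 2.10 of this paper, exactly as is done in the proof of Theorem 5.3; beyond that, all steps are straightforward bookkeeping on direct-sum decompositions.
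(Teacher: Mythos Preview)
Your proposal is correct and follows essentially the same approach as the paper, which proves Theorem 5.7 simply by stating ``Similarly, we obtain the following results'' after Theorem 5.3; your write-up is precisely the finitely-generated analogue of that argument, invoking [1, Lemma 3.10] for one direction and the DM hypothesis via [1, Lemma 3.2] for the other, together with the left-right symmetry of $I$-semiregularity. Your explicit verification that $X\cap Y\subseteq I\cap Y=IY$ via the decomposition $I=IA\oplus IY$ and your care in distinguishing the DM-based covers of [1] from the PSD-based covers of Definition 2.6 make the argument cleaner than the paper's terse reference.
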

\begin{cor} The following statements are equivalent for a ring $R$.
\begin{enumerate}
\item $R$ is ($\delta$-) semiregular.
\item $_{R}R$ is a finite ($\delta$-) supplemented module.
\item $R_{R}$ is a finite ($\delta$-) supplemented module.
\item $_{R}R$ is a finite $J(R)$($\delta(_{R}R)$)-supplemented
module.
\item $R_{R}$ is a finite $J(R)$($\delta(_{R}R)$)-supplemented module.
\end{enumerate}
\end{cor}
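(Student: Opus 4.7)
The plan is to reduce Corollary 5.8 to Theorem 5.7 together with an easy finitely-generated analogue of Theorem 5.2, exactly mirroring the way Corollary 5.6 was deduced from Theorems 5.2 and 5.3. The point to exploit is that, as noted in the paragraph preceding Corollary 5.6, $R$ is automatically a left DM ring for the ideals $I=J(R)$ and $I=\delta(_{R}R)$, so Theorem 5.7 applies directly for these two choices of $I$.

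First I would verify the two equivalences $(1)\Leftrightarrow (4)$ and $(1)\Leftrightarrow (5)$ by specializing Theorem 5.7 to $I=J(R)$ and to $I=\delta(_{R}R)$: since a ring is semiregular (respectively $\delta$-semiregular) iff it is $J(R)$-semiregular (respectively $\delta(_{R}R)$-semiregular), and the DM hypothesis is automatic for these ideals, Theorem 5.7 yields that $_{R}R$ (respectively $R_{R}$) is a finitely $J(R)$- or $\delta(_{R}R)$-supplemented module iff $R$ is ($\delta$-)semiregular. This gives the left/right symmetric bridge between (1), (4), and (5) with no further work.

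Next I would establish $(2)\Leftrightarrow (4)$ and $(3)\Leftrightarrow (5)$ by proving the finitely generated analogue of Theorem 5.2: for a projective $R$-module $M$, $M$ is finitely $J(R)$-supplemented iff $M$ is finitely supplemented, and $M$ is finitely $\delta(_{R}R)$-supplemented iff $M$ is finitely $\delta$-supplemented. The forward direction repeats the argument of Theorem 5.2 verbatim, but starting with a finitely generated submodule $X$ of $M$: one takes a projective $Y$ with $X+Y=M$, $X\cap Y\subseteq J(R)Y$ (resp.\ $\delta(_{R}R)Y$), and $X\cap Y$ DM in $Y$; a DM-decomposition argument applied to any decomposition $X\cap Y+L=Y$ produces a projective direct summand $K\subseteq X\cap Y$ of $Y$ with $K=\mathrm{Rad}(K)$ (resp.\ $K=\delta(K)$), forcing $K=0$ (resp.\ $K$ semisimple), and hence $X\cap Y\ll Y$ (resp.\ $X\cap Y\ll_\delta Y$). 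Conversely, if $M$ is finitely ($\delta$-)supplemented, one invokes projectivity of $M$ to turn an arbitrary supplement of a finitely generated $X$ into a direct summand $Y$, whose projectivity together with $X\cap Y\subseteq \mathrm{Rad}(Y)=J(R)Y$ (resp.\ $\subseteq\delta(Y)=\delta(_{R}R)Y$) and the DM-property of $X\cap Y$ in $Y$ (which is immediate from smallness) gives the required $I$-supplement, as in Theorem 5.2.

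The main obstacle is being careful about the $\delta$-case, since $K$ need not vanish under $K=\delta(K)$; here one has to cite \cite[Lemma 1.2]{20} (as in the proof of Theorem 5.2) to convert the semisimplicity of $K$ into $X\cap Y\ll_{\delta} Y$. Everything else is a routine specialization, and the left/right symmetry in the conclusions comes for free from the left/right symmetry of ($\delta$-)semiregularity. Once the finite analogue of Theorem 5.2 is in hand, chaining $(2)\Leftrightarrow(4)\Leftrightarrow(1)\Leftrightarrow(5)\Leftrightarrow(3)$ completes the proof.
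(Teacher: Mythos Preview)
Your proposal is correct and follows essentially the same approach as the paper. The paper gives no explicit proof of Corollary 5.8, merely introducing Theorem 5.7 and Corollary 5.8 with ``Similarly, we obtain the following results''; in light of the proof of Corollary 5.6 (``It follows by Theorem 5.2 and 5.3''), the intended argument is precisely what you outline---specialize Theorem 5.7 to $I=J(R)$ and $I=\delta(_{R}R)$, and combine with the finitely generated analogue of Theorem 5.2.
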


Since if $I\subseteq Soc(_{R}R)$, then $R$ is a left DM ring, we
have
\begin{cor} A ring $R$ is $Soc(_{R}R)$-semiperfect if and
only if $_{R}R$ is a  $Soc(_{R}R)$-supplemented module.
\end{cor}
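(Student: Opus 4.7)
The plan is to reduce Corollary 5.9 to Corollary 5.5 by verifying the missing hypothesis. Corollary 5.5 says that for a left DM ring $R$ and an ideal $I$, $I$-semiperfectness is equivalent to $_{R}R$ being $I$-supplemented. So it suffices to check that, taking $I = Soc(_{R}R)$, the ring $R$ qualifies as a left DM ring for this ideal.

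First, I invoke the remark immediately preceding the corollary: if $I \subseteq Soc(_{R}R)$ then $R$ is a left DM ring for $I$. Taking $I = Soc(_{R}R)$ itself, which is trivially contained in $Soc(_{R}R)$, this says $R$ is a left DM ring for $Soc(_{R}R)$. Concretely, for any finitely generated free left $R$-module $F$, every submodule of $Soc(_{R}R) \cdot F \subseteq Soc(F)$ is semisimple, hence a direct summand of its span in $F$, and the required DM property follows since any supplementing submodule splits off a projective summand contained in the given submodule.

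Once this hypothesis is established, Corollary 5.5 applied with $I = Soc(_{R}R)$ immediately gives the desired equivalence: $R$ is $Soc(_{R}R)$-semiperfect if and only if $_{R}R$ is $Soc(_{R}R)$-supplemented.

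The only potentially nontrivial point is the passing remark that $I \subseteq Soc(_{R}R)$ forces $R$ to be left DM for $I$; but this is exactly the observation already used to pass from Theorem 2.13 to Corollary 2.12 (and from Theorem 3.10 to Corollary 3.9) earlier in the paper, so I would simply cite that observation rather than reprove it. Hence the entire argument collapses to a one-line application of Corollary 5.5, and I expect no genuine obstacle.
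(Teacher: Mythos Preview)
Your proposal is correct and matches the paper's approach exactly: the paper simply records the sentence ``Since if $I\subseteq Soc(_{R}R)$, then $R$ is a left DM ring, we have'' and then states the corollary, which is precisely your reduction to Corollary~5.5 via the DM hypothesis. One minor quibble: the earlier observations you cite (before Corollaries~2.12 and~3.9) concern the \emph{PSD} property rather than the DM property, so the cleaner reference is the remark immediately preceding Corollary~5.9 itself, which is already what you primarily invoke.
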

\begin{cor} A ring $R$ is $Soc(_{R}R)$-semiregular if and
only if $_{R}R$ is a  finitely $Soc(_{R}R)$-supplemented module if
and only if $R_{R}$ is a finitely $Soc(R_{R})$-supplemented module.
\end{cor}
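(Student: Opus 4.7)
The plan is to reduce the corollary to a one-line application of Theorem 5.7. The only nontrivial hypothesis of Theorem 5.7 is that $R$ be a left DM ring for the ideal $I$ under consideration. Taking $I = \mathrm{Soc}(_{R}R)$, we have trivially $I \subseteq \mathrm{Soc}(_{R}R)$, so the observation recorded in the paragraph preceding Corollary 5.9 (``if $I \subseteq \mathrm{Soc}(_{R}R)$, then $R$ is a left DM ring'') applies and gives that $R$ is a left DM ring for $I$. Invoking Theorem 5.7 with this $I$ then produces the equivalence
\[
R \text{ is } \mathrm{Soc}(_{R}R)\text{-semiregular} \iff {}_{R}R \text{ is finitely } \mathrm{Soc}(_{R}R)\text{-supplemented}.
\]

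For the third condition, which is phrased on the right-hand side with respect to $\mathrm{Soc}(R_{R})$, I would run the same argument on the opposite ring. The right-handed analogue of the DM observation shows that $R$ is a right DM ring for $\mathrm{Soc}(R_{R})$, so the right-sided version of Theorem 5.7, applied with the ideal $\mathrm{Soc}(R_{R})$, yields that $R$ is $\mathrm{Soc}(R_{R})$-semiregular if and only if $R_{R}$ is finitely $\mathrm{Soc}(R_{R})$-supplemented. The left-right symmetry of $I$-semiregular rings recorded in the Introduction then ties together the three conditions, since being $\mathrm{Soc}(_{R}R)$-semiregular and being $\mathrm{Soc}(R_{R})$-semiregular are each intrinsic two-sided properties of $R$ that can be tested on either side.

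I do not expect any genuine obstacle: once the DM property is supplied by the socle observation, the corollary is a formal consequence of Theorem 5.7 applied once on each side. The only delicate bookkeeping concerns distinguishing the left socle $\mathrm{Soc}(_{R}R)$ from the right socle $\mathrm{Soc}(R_{R})$ in the statement, which is handled by the symmetric application described above rather than by trying to identify the two socles directly.
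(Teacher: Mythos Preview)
Your overall approach is exactly what the paper does: no separate proof is given for Corollary~5.10, which is simply recorded as an immediate consequence of the observation that $I\subseteq\mathrm{Soc}({}_{R}R)$ forces $R$ to be a left DM ring, together with Theorem~5.7.

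One caution about your treatment of the third clause. The left--right symmetry of $I$-semiregularity noted in the Introduction applies to a \emph{fixed} two-sided ideal $I$: it says that ${}_{R}R$ is $I$-semiregular iff $R_{R}$ is $I$-semiregular for the \emph{same} $I$. It does not by itself identify $\mathrm{Soc}({}_{R}R)$-semiregularity with $\mathrm{Soc}(R_{R})$-semiregularity when the two socles may differ as ideals. That equivalence is a separate fact from Yousif--Zhou~[19], and the paper is tacitly relying on it here (or else simply intends $\mathrm{Soc}({}_{R}R)$ throughout, in which case Corollary~5.10 is literally the specialization of Theorem~5.7 to $I=\mathrm{Soc}({}_{R}R)$). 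Either way your main reduction is the right one; just be aware that connecting the left- and right-socle conditions needs an external citation rather than the generic symmetry principle you invoke.
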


\end{document}